\newcommand{\eqq}[2]{\begin{equation}  #1  \label{#2} \end{equation}    }
\newcommand*{\norm}[1]{\left\Vert{#1}\right\Vert}
\newcommand*{\abs}[1]{\left\vert{#1}\right\vert}
\newcommand*{\izj}{\int_{0}^{1}}
\newcommand*{\p}{\partial}
\newcommand*{\poch}{\frac{\p}{\p x}}
\newcommand*{\ia}{I^{\alpha}}
\newcommand*{\al}{\alpha}
\newcommand*{\uz}{u_{0}}
\newcommand*{\ve}{\varepsilon}
\def\Re{\operatorname {Re}}
\def\Im{\operatorname {Im}}
\def\arg{\operatorname{arg}}
\newcommand{\hd}{\hspace{0.2cm}}
\newcommand{\no}{\noindent}
\newcommand{\m}[1]{\mbox{#1}}
\newtheorem{rem}{{\textbf {Remark}}}
\newtheorem{lem}{{\textbf {Lemma}}}
\newtheorem{prop}{{\textbf {Proposition}}}
\newtheorem{theorem}{\textbf {Theorem}}
\newtheorem{coro}{\textbf  {Corollary} }
\newtheorem{de}{\textbf  {Definition} }
\newcommand{\izx}{\int_{0}^{x}}
\newcommand{\da}{D^{\alpha}}
\newcommand{\ld}{L^{2}(0,1)}
\newcommand{\jgja}{\frac{1}{\Gamma(1-\al)}}
\newcommand{\ca}{c_{\al}}
\newcommand{\ba}{b_{\al}}
\newcommand{\dd}{\mathcal{D}_{\al}}
\begin{document}
\title{\bf An analytic semigroup generated by a fractional differential operator}

\author{Katarzyna Ryszewska\footnote{Department of Mathematics and Information
Sciences, Warsaw University of Technology, ul. Koszykowa 75, 00-662 Warsaw,
Poland, E-mail address:
 K.Ryszewska@mini.pw.edu.pl}}

\date{}

\maketitle

\abstract{ }
We study a space-fractional diffusion problem, where the non-local diffusion flux involves the Caputo derivative of the diffusing quantity. We prove the unique existence of regular solutions to this problem by means of the semigroup theory. We show that the operator defined as divergence of product of a positive function with the Caputo derivative is a generator of analytic semigroup.
\vspace{0.4cm}

\no Keywords: space-fractional diffusion; Caputo derivative; analytic semigroup theory.
\vspace{0.2cm}

\no AMS subject classifications (2010):  47D60, 35R11, 35K20, 26A33.

\section{Introduction }
In the paper we consider the following initial-boundary value problem
\begin{equation}\label{aa}
 \left\{ \begin{array}{ll}
u_{t} - \frac{\p}{\p x}(p(x) \da u) = f & \textrm{ in } (0,1) \times (0,T),\\
u_{x}(0,t) = 0, \ \ u(1,t) = 0 & \textrm{ for  } t \in (0,T), \\
u(x,0) = u_{0}(x) & \textrm{ in } (0,1), \\
\end{array} \right. \end{equation}
where, for $\al \in (0,1)$, the operator $\da$ denotes the fractional Caputo derivative with respect to spatial variable and $p$ is a positive lipschitz function. The motivation for studying (\ref{aa}) originates from modelling of sub-surface water motion.
In paper \cite{V2} the author considered the model of infiltration of water into heterogeneous soils. Due to the presence of media heterogeneity, the author proposed representing the hydraulic flux in terms of fractional derivative. The similar approach for modelling the non-locality in space was presented in \cite{V1}. This paper provides the overview of one-phase Stefan problems which exhibits anomalous behaviour. The author introduced the model where the diffusive operator is in the divergence form and the flux is the Caputo derivative of the temperature. In the present paper we consider the simplified problem (\ref{aa}). We would like to emphasise that from the modelling point of view, it is important that (\ref{aa}) is a balance low.

Our goal is to study the basic solvability problem for equation (\ref{aa}), by the semigroup approach. At this moment we would like to present a broader context. It is worth to mention here, that in the paper \cite{stoch} the authors studied an array of fractional diffusion equations, for a variety of boundary conditions, but they constructed only the $C_{0}$-semigroup. Besides, we note that the problem (\ref{aa}) was not explicitly studied in \cite{stoch}. Another paper that presents the probabilistic point of view on space-fractional problems is \cite{stoch2}, where the authors consider equations with time-fractional Caputo derivative and non-local space operators. A completely different approach for solving (\ref{aa}) for $p\equiv 1$ with zero Dirichlet boundary conditions is employed in \cite{visco}, where the authors obtained the viscosity solutions. Further discussion was made in \cite{stoch3} and \cite{VR} where the authors compare the problems with diffusive flux modeled by the Caputo and the Riemann-Liouville derivative and carry a numerical analysis.

In the present paper we will present the results concerning solvability of (\ref{aa}) by means of the semigroup theory. At first, we will focus our attention on the case where $p \equiv 1$. We will describe the domain of $\poch \da$ in terms of Sobolev spaces and as a final result we will construct an analytic semigroup. Precisely, we will show that the operator $\poch \da$, considered on the domain
\[
\dd := \{u \in H^{1+\al}(0,1) :u_{x} \in {}_{0}H^{\al}(0,1),   u(1)=0\}
\]
generates an analytic semigroup. Here, by $H^{\al}(0,1)$ we mean the fractional Sobolev space (see \cite[definition 9.1]{Lions}) and the subspace ${}_{0}H^{\al}(0,1)$ will be introduced in Proposition \ref{eq}. Subsequently, we will extend our result for the case of any strictly positive lipschitz continuous $p$. We would like to emphasise that, developing the theory of analytic semigroups for the systems of type (\ref{aa}) seems to be especially important if we notice that the operator $\poch \da$ is not self-adjoint, thus we can not expect that its eigenfunctions generate the basis of any natural Hilbert space.

The paper is organized as follows. In chapter 2 we give preliminary results concerning fractional operators. Chapter 3 is devoted to the proof of the main result. Namely, we will show that $\poch \da$ is a generator of $C_{0}$-semigroup of contractions which can be extended to the analytic semigroup (Theorem \ref{anal}). In chapter 4, we extend the results of the previous chapter to the case of operator $\poch (p(x)\da)$, where $p$ is a positive lipschitz function (Theorem \ref{gen}). In chapter 5, we will present the basic applications of this result to solvability of (\ref{aa}). At last, for the sake of completeness, in Appendix we collect the general results from calculus that we used in the paper.
\section{Preliminaries from fractional calculus}
Before we will establish the results, we will introduce definitions of fractional operators and recall some of their properties. For more comprehensive studies on general properties of fractional operators we refer to a standard literature \cite{Kilbas}, \cite{Samko}.\\
\begin{de}\label{fracdef}
Let $L, \al > 0$. For $f \in L^{1}(0,L)$ we introduce the fractional integral $\ia$ by the formula
\[
\ia f (x) = \frac{1}{\Gamma(\al)}\izx (x-p)^{\al-1}f(p)dp.
\]
If $\al \in (0,1)$ and $f$ is regular enough we may define the Riemann-Liouville fractional derivative
\[
\p^{\al}f(x) =  \poch I^{1-\al}f(x)= \jgja \poch \izx (x-p)^{-\al} f(p)dp
\]
and the Caputo fractional derivative
\[
\da f(x) = \poch (I^{1-\al}[f(x)-f(0)])=  \jgja  \izx (x-p)^{-\al}[f(p)-f(0)]dp.
\]
We note that if $f \in AC[0,T]$, then $\da f$ may be equivalently written in the form
\[
\da f(x) = I^{1-\al}f'(x) = \jgja  \izx (x-p)^{-\al}f'(p)dp.
\]
We will also make use of the formal adjoint operators, that is
\[
I^{\al}_{-}f(x) = \frac{1}{\Gamma(\al)}\int_{x}^{L}(p-x)^{\al-1}f(p)dp,
\]
\[
\p^{\al}_{-}f(x) = - \poch (I^{1-\al}_{-}f)(x) \m{ and } D^{\al}_{-}f(x) = - \poch I^{1-\al}_{-}[f(x)-f(L)].
\]
\end{de}
It is clear that for absolutely continuous functions the foregoing fractional differential operators are well defined. Further discussion on the correctness of these definitions is performed in \cite{Kilbas} and \cite{Samko}. In Proposition \ref{eq}, we will introduce the definition of the domain for the Riemann-Liouville fractional derivative in terms of Sobolev spaces which was established in \cite{Y}.
\no Before we will pass to that result, we will present a simple proposition which gives us the formula for the superposition of $\p^{\al}$ and~$\da$.

\begin{prop}\cite[Proposition 6.5]{KY}\label{skladanie}
Let $L > 0$. For $\al, \beta \in (0,1)$ such that $\al+\beta \leq 1$ and $u \in AC[0,L]$ we have $\p^{\al}D^{\beta}u = D^{\al+\beta}u.$
\end{prop}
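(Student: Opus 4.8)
The plan is to reduce both sides to fractional integrals of $u'$, using the absolute continuity of $u$, and then to exploit the semigroup property of the operators $\ia$ together with the fundamental theorem of calculus. Since $u \in AC[0,L]$, Definition \ref{fracdef} gives $D^{\beta}u = I^{1-\beta}u'$ and, for the order $\al+\beta \in (0,1]$, $D^{\al+\beta}u = I^{1-\al-\beta}u'$ (with the convention $I^{0}=\mathrm{Id}$, $D^{1}u = u'$ in the limiting case $\al+\beta=1$). Applying the definition of the Riemann--Liouville derivative,
\[
\p^{\al}D^{\beta}u = \poch I^{1-\al}\!\left(I^{1-\beta}u'\right).
\]

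Next I would use the semigroup property of fractional integration, $\ia I^{\gamma}f = I^{\al+\gamma}f$ for $\al,\gamma > 0$ and $f \in L^{1}(0,L)$ (a standard fact, see \cite{Samko} or \cite{Kilbas}), which applies since $1-\al,\,1-\beta > 0$ and $u' \in L^{1}(0,L)$; this gives $I^{1-\al}I^{1-\beta}u' = I^{2-\al-\beta}u'$. Because $\al+\beta \le 1$ we have $2-\al-\beta \ge 1$, so a second application of the semigroup property, now with exponents $1$ and $1-\al-\beta \ge 0$, yields $I^{2-\al-\beta}u' = I^{1}\!\left(I^{1-\al-\beta}u'\right)$.

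Finally, since $u' \in L^{1}(0,L)$ the function $g := I^{1-\al-\beta}u'$ is again in $L^{1}(0,L)$, hence $I^{1}g(x) = \izx g(p)\,dp$ is absolutely continuous and $\poch I^{1}g = g$ a.e.\ on $(0,L)$ by the Lebesgue differentiation theorem. Chaining the three identities,
\[
\p^{\al}D^{\beta}u = \poch I^{1}\!\left(I^{1-\al-\beta}u'\right) = I^{1-\al-\beta}u' = D^{\al+\beta}u,
\]
which is the assertion. The computation is short; the only points requiring attention are the bookkeeping in the limiting case $\al+\beta=1$ and checking that every intermediate fractional integral is applied to an $L^{1}$ function, so that both the semigroup identity and the fundamental theorem of calculus are legitimate. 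I do not anticipate a real obstacle beyond this routine care.
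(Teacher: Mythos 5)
Your proof is correct: reducing both Caputo derivatives to fractional integrals of $u'$ via absolute continuity, combining $I^{1-\al}I^{1-\beta}u'=I^{2-\al-\beta}u'=I^{1}\left(I^{1-\al-\beta}u'\right)$ by the semigroup property, and then differentiating the resulting indefinite integral a.e.\ is the standard argument for this composition identity, with the limiting case $\al+\beta=1$ and the $L^{1}$ hypotheses handled appropriately. The paper itself gives no proof, only the citation to \cite[Proposition 6.5]{KY}, and your computation is essentially the argument that citation refers to, understood (as is natural here) as an equality a.e.\ on $(0,L)$.
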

\no The next proposition provides the energy estimate for the Riemann-Liouville fractional derivative.
\begin{prop}\cite[Proposition 6.10]{KY}
If $w \in AC[0, L]$ then for any $\al \in (0,1)$ the following equality holds,
\[
\int_{0}^{L} \p^{\al}w(x) \cdot w(x)dx = \frac{\al}{4} \int_{0}^{L} \int_{0}^{L} \frac{\abs{w(x)-w(p)}^{2}}{\abs{x-p}^{1+\al}}dp dx
\]
\[
+\frac{1}{2\Gamma(1-\al)}\int_{0}^{L} [(L-x)^{-\al} + x^{-\al}]\abs{w(x)}^{2} dx.
\]
Hence, there exist a positive constant $c$ which depends only on $\al,L$, such that
\eqq{
\int_{0}^{L} \p^{\al}w(x) \cdot w(x)dx  \geq c \norm{w}_{H^{\frac{\al}{2}}(0,L)}^{2}
}{AH}
and in particular
\[
\int_{0}^{L} \p^{\al}w(x) \cdot w(x)dx  \geq \frac{L^{-\al}}{2\Gamma(1-\al)}\int_{0}^{L}\abs{w(x)}^{2} dx.
\]
\end{prop}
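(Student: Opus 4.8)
The plan is to reduce the identity to a symmetrisation argument applied to a pointwise (Marchaud-type) representation of $\p^{\al}w$. \emph{Step 1 (Marchaud form).} For $w\in AC[0,L]$ I substitute $w(p)=w(x)-(w(x)-w(p))$ inside $I^{1-\al}w(x)=\jgja\izx(x-p)^{-\al}w(p)\,dp$ and use $\izx(x-p)^{-\al}\,dp=\frac{x^{1-\al}}{1-\al}$; differentiating in $x$ (the contribution of the endpoint $p=x$ vanishes because the relevant integrand there behaves like $(x-p)^{1-\al}$ and $1-\al>0$), the two $w'(x)$-terms cancel and one gets
\[
\p^{\al}w(x)=\jgja\Big(\frac{w(x)}{x^{\al}}+\al\izx\frac{w(x)-w(p)}{(x-p)^{1+\al}}\,dp\Big).
\]
Multiplying by $w(x)$ and integrating over $(0,L)$ gives
\[
\int_{0}^{L}\p^{\al}w\cdot w\,dx=\jgja\int_{0}^{L}\frac{\abs{w(x)}^{2}}{x^{\al}}\,dx+\al\,\jgja\,J,\qquad J:=\int_{0}^{L}\izx\frac{(w(x)-w(p))\,w(x)}{(x-p)^{1+\al}}\,dp\,dx.
\]

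\emph{Step 2 (symmetrisation).} I use the elementary identity $2(w(x)-w(p))w(x)=\abs{w(x)-w(p)}^{2}+\big(\abs{w(x)}^{2}-\abs{w(p)}^{2}\big)$. Since the kernel $\abs{x-p}^{-1-\al}$ is symmetric, the part coming from $\abs{w(x)-w(p)}^{2}$, integrated over the triangle $\{0<p<x<L\}$, equals one half of the integral over the whole square, whence
\[
J=\frac14\int_{0}^{L}\int_{0}^{L}\frac{\abs{w(x)-w(p)}^{2}}{\abs{x-p}^{1+\al}}\,dp\,dx+\frac12\,M,\qquad M:=\int_{0}^{L}\izx\frac{\abs{w(x)}^{2}-\abs{w(p)}^{2}}{(x-p)^{1+\al}}\,dp\,dx.
\]

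\emph{Step 3 (the antisymmetric term and conclusion).} I set $g:=\abs{w}^{2}\in AC[0,L]$ and integrate by parts in $p$ (again the endpoint $p=x$ gives nothing), obtaining $\izx\frac{g(x)-g(p)}{(x-p)^{1+\al}}\,dp=-\frac{(g(x)-g(0))\,x^{-\al}}{\al}+\frac1\al\izx(x-p)^{-\al}g'(p)\,dp$. After Fubini, $\int_{0}^{L}\izx(x-p)^{-\al}g'(p)\,dp\,dx=\int_{0}^{L}g'(p)\frac{(L-p)^{1-\al}}{1-\al}\,dp$, and one further integration by parts makes the $g(0)$-terms cancel, leaving $M=\frac1\al\int_{0}^{L}\abs{w(x)}^{2}\big[(L-x)^{-\al}-x^{-\al}\big]\,dx$. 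Substituting this into Steps~1--2, the two $x^{-\al}$-contributions combine into the weight $\frac{1}{2\Gamma(1-\al)}x^{-\al}$ and the asserted identity follows. For the consequences I note that on $(0,L)$ both $x^{-\al}$ and $(L-x)^{-\al}$ are $\ge L^{-\al}$; dropping the (nonnegative) double integral gives $\int_{0}^{L}\p^{\al}w\cdot w\,dx\ge\frac{L^{-\al}}{2\Gamma(1-\al)}\int_{0}^{L}\abs{w}^{2}\,dx$, while keeping it and recalling that $\int_{0}^{L}\int_{0}^{L}\abs{w(x)-w(p)}^{2}\abs{x-p}^{-1-\al}\,dp\,dx$ together with $\norm{w}_{L^{2}(0,L)}^{2}$ is equivalent to $\norm{w}_{H^{\al/2}(0,L)}^{2}$ (Gagliardo seminorm with $s=\al/2$, i.e.\ exponent $1+\al$) yields (\ref{AH}).

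I expect the technical heart to be Step~3: the integrals defining $M$ and its rearrangements are only improperly convergent, so the use of Fubini and of the two integrations by parts must be justified (for instance by first integrating over $\{0<p<x-\ve\}$ and letting $\ve\to 0^{+}$), and the boundary terms at $p=0$ have to be tracked carefully in order to see that the $g(0)$-contributions cancel. The vanishing of all endpoint terms at $p=x$ throughout the argument relies on $\al<1$.
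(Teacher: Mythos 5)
The paper itself offers no proof of this proposition: it is quoted from \cite[Proposition 6.10]{KY}, so there is nothing to compare line by line. Your argument -- pass to the Marchaud form of $\p^{\al}w$, multiply by $w$, symmetrise via $2(w(x)-w(p))w(x)=\abs{w(x)-w(p)}^{2}+\abs{w(x)}^{2}-\abs{w(p)}^{2}$, and compute the antisymmetric term explicitly -- is the standard route to this energy identity and is essentially correct; I checked Step 3 and indeed $M=\frac{1}{\al}\int_{0}^{L}\abs{w(x)}^{2}\bigl[(L-x)^{-\al}-x^{-\al}\bigr]dx$, with the $g(0)$-terms cancelling as you say. Two remarks. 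First, a constant: your computation yields the coefficient $\frac{\al}{4\Gamma(1-\al)}$ in front of the double integral, not the $\frac{\al}{4}$ appearing in the displayed statement; yours is the correct value (test $w(x)=x$, $\al=\tfrac12$: the identity balances only with the extra $\Gamma(1-\al)$), so the display as quoted carries a typo -- harmless, since both versions give (\ref{AH}) and the $L^{2}$ lower bound, but you should not claim that ``the asserted identity follows'' verbatim. Second, the technical justification you defer is simpler than you fear: for $w\in AC[0,L]$ one has $w',\,(\abs{w}^{2})'\in L^{1}(0,L)$, and inserting $w(x)-w(p)=\int_{p}^{x}w'(s)\,ds$ together with Fubini shows that every integral in Steps 1--3 converges absolutely (e.g. $\int_{0}^{L}\izx (x-p)^{-1-\al}\int_{p}^{x}\abs{g'}\,ds\,dp\,dx\leq \frac{1}{\al(1-\al)}\norm{g'}_{L^{1}}L^{1-\al}$) and yields the Marchaud formula and the value of $M$ with no boundary terms at $p=x$ at all; this is preferable to your pointwise claim that the integrand ``behaves like $(x-p)^{1-\al}$'' near $p=x$, which need not hold pointwise for a general absolutely continuous $w$. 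The concluding equivalence of $\norm{w}_{L^{2}}^{2}$ plus the Gagliardo seminorm with exponent $1+\al$ to $\norm{w}_{H^{\al/2}(0,L)}^{2}$ is correct since $\al/2<\tfrac12$.
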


Now, we will introduce the characterization of the domain of the Riemann- Liouville derivative in the $L^{2}$-framework. It will be essential for further considerations.
\no At first let us define the following functional spaces
\[
_{0}H^{\al}(0,1)=\left\{
\begin{array}{lll}
H^{\al}(0,1)  & \m{ for } & \al\in (0,\frac{1}{2}), \\
\{ u \in H^{\frac{1}{2}}(0,1): \hd \int_{0}^{1}\frac{|u(x)|^{2}}{x}dx<\infty \} & \m{ for } & \al=\frac{1}{2}, \\
\{ u \in H^{\al}(0,1): \hd u(0)=0\} & \m{ for } & \al\in (\frac{1}{2},1)
\end{array}
\right.
\]\
and
\[
^{0}H^{\al}(0,1)=\left\{
\begin{array}{lll}
H^{\al}(0,1)  & \m{ for } & \al\in (0,\frac{1}{2}), \\
\{ u \in H^{\frac{1}{2}}(0,1): \hd \int_{0}^{1}\frac{|u(x)|^{2}}{1-x}dx<\infty \} & \m{ for } & \al=\frac{1}{2}, \\
\{ u \in H^{\al}(0,1): \hd u(1)=0\} & \m{ for } & \al\in (\frac{1}{2},1).
\end{array}
\right.
\]\
We set $\norm{u}_{_{0}H^{\al}(0,1)} = \norm{u}_{^{0}H^{\al}(0,1)} = \norm{u}_{H^{\al}(0,1)}$ for $\al \neq \frac{1}{2}$ and
\[
\norm{u}_{_{0}H^{\frac{1}{2}}(0,1)}  =\left(\norm{u}_{H^{\frac{1}{2}}(0,1)}^{2} + \izj \frac{\abs{u(x)}^{2}}{x} dx\right)^{\frac{1}{2}},
\]
\[
\norm{u}_{^{0}H^{\frac{1}{2}}(0,1)} =\left(\norm{u}_{H^{\frac{1}{2}}(0,1)}^{2} + \izj \frac{\abs{u(x)}^{2}}{1-x} dx\right)^{\frac{1}{2}}.
\]
The following proposition is the extended version of \cite[Theorem 2.1]{Y} which can be found in the Appendix of \cite{KY}.
\begin{prop}\label{eq}
For $\al\in [0,1]$ the operators
$\ia: L^{2}(0,1)\longrightarrow {}_{0}H^{\al}(0,1)$ and $\p^{\al}:{}_{0}H^{\al}(0,1)\longrightarrow L^{2}(0,1)$ are isomorphism and the following
inequalities hold
$$\ca^{-1} \| u \|_{_{0}H^{\al}(0,1)}
\leq \| \p^{\al} u \|_{L^{2}(0,1)}\leq \ca
\| u \|_{_{0}H^{\al}(0,1)} \hd \m{ for } u \in {}_{0}H^{\al}(0,1),
$$

$$
\ca^{-1} \| \ia f  \|_{_{0}H^{\al}(0,1)}
\leq \|  f \|_{L^{2}(0,1)}\leq \ca \| \ia f
\|_{_{0}H^{\al}(0,1)} \hd \m{ for } f  \in L^{2}(0,1).
$$
Analogously, by the change of variables $x\mapsto 1-x$, we obtain that the operators
$\ia_{-}: L^{2}(0,1)\longrightarrow {}^{0}H^{\al}(0,1)$  and $\p^{\al}_{-}:{}^{0}H^{\al}(0,1)\longrightarrow L^{2}(0,1)$ are isomorphism and there hold the inequalities
$$\ca^{-1} \| u \|_{^{0}H^{\al}(0,1)}
\leq \| \p^{\al}_{-} u \|_{L^{2}(0,1)}\leq \ca
\| u \|_{^{0}H^{\al}(0,1)} \hd \m{ for } u \in {}^{0}H^{\al}(0,1),
$$

$$
\ca^{-1} \| \ia_{-} f  \|_{^{0}H^{\al}(0,1)}
\leq \|  f \|_{L^{2}(0,1)}\leq \ca \| \ia_{-} f
\|_{^{0}H^{\al}(0,1)} \hd \m{ for } f  \in L^{2}(0,1).
$$\
Here $\ca$ denotes a positive constant dependent on $\al$.
\end{prop}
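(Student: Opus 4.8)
The plan is to reduce everything to the single operator identity $\p^\al I^\al = \mathrm{Id}$ on $L^2(0,1)$ together with the mapping properties of $I^\al$ on Sobolev scales, and then to invoke the cited result \cite[Theorem 2.1]{Y} (in the extended form of \cite{KY}) to supply the delicate endpoint behaviour. First I would dispose of the two extreme cases: for $\al=0$ both operators are the identity and there is nothing to prove, while for $\al=1$ one has ${}_{0}H^{1}(0,1)=\{u\in H^1(0,1):u(0)=0\}$, $I^1$ is ordinary integration from $0$, $\p^1=\frac{d}{dx}$, and the stated isomorphism with the norm equivalence $\ca^{-1}\|u\|_{H^1}\le\|u'\|_{L^2}\le\ca\|u\|_{H^1}$ on functions vanishing at $0$ is the Poincar\'e inequality. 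So the content is the range $\al\in(0,1)$.

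For $\al\in(0,1)$ the core steps are: (i) show $I^\al:L^2(0,1)\to L^2(0,1)$ is bounded (Young's inequality for the convolution kernel $x^{\al-1}/\Gamma(\al)\in L^1$), and more precisely that it maps into the Sobolev space of order $\al$ with the correct vanishing trace — this is exactly the statement of \cite[Theorem 2.1]{Y}, which identifies $I^\al(L^2(0,1))$ with ${}_{0}H^\al(0,1)$ and provides the two-sided estimate $\ca^{-1}\|f\|_{L^2}\le\|I^\al f\|_{{}_{0}H^\al}\le\ca\|f\|_{L^2}$; (ii) verify the algebraic identities $\p^\al I^\al f=f$ for $f\in L^2(0,1)$ and $I^\al\p^\al u=u$ for $u\in{}_{0}H^\al(0,1)$, which together make $\p^\al$ and $I^\al$ mutually inverse; (iii) combine (i) and (ii): boundedness of $I^\al$ as a map $L^2\to{}_{0}H^\al$ with bounded inverse $\p^\al$ gives at once that $\p^\al:{}_{0}H^\al(0,1)\to L^2(0,1)$ is an isomorphism and that $\ca^{-1}\|u\|_{{}_{0}H^\al}\le\|\p^\al u\|_{L^2}\le\ca\|u\|_{{}_{0}H^\al}$. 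The first displayed chain of inequalities is then just the pair of estimates in (i) rewritten via $u=I^\al f$, $f=\p^\al u$.

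The identity $\p^\al I^\al=\mathrm{Id}$ is the semigroup property of Riemann--Liouville integration, $I^{1-\al}I^\al=I^1$, differentiated once; this is classical and holds for all $f\in L^1$. The converse identity $I^\al\p^\al u=u$ requires knowing that $u\in{}_{0}H^\al(0,1)$ can be written as $I^\al g$ for some $g\in L^2$ — i.e. it is really part of the surjectivity statement in \cite[Theorem 2.1]{Y} — so here I would simply cite that theorem rather than reprove the characterization of the range, since reconstructing the endpoint analysis (the logarithmic subtlety at $\al=\tfrac12$, where the plain trace condition degenerates into the weighted integrability $\int_0^1|u(x)|^2/x\,dx<\infty$) is precisely the hard part and is what \cite{Y} and the Appendix of \cite{KY} are invoked for. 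Finally, the statements for $I^\al_-$ and $\p^\al_-$ follow from those for $I^\al$, $\p^\al$ by the affine change of variables $x\mapsto 1-x$, which is an isometry of $L^2(0,1)$ and maps ${}_{0}H^\al(0,1)$ onto ${}^{0}H^\al(0,1)$ with equal norms; I would just remark that this substitution carries $I^\al$ to $I^\al_-$ and the left endpoint conditions to the right endpoint ones, so no new estimate is needed. The only genuine obstacle is thus the endpoint case $\al=\tfrac12$, and for that the proposition explicitly leans on the external reference.
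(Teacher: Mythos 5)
Your proposal is consistent with the paper, which gives no proof of this proposition at all: it is stated as a quotation of \cite[Theorem 2.1]{Y} in the extended form from the Appendix of \cite{KY}, exactly the references you lean on for the range characterization and the $\al=\tfrac12$ endpoint. The extra glue you supply (the trivial cases $\al=0,1$, the mutual-inverse identities $\p^{\al}I^{\al}=\mathrm{Id}$ and $I^{\al}\p^{\al}=\mathrm{Id}$ on the stated domains, and the reflection $x\mapsto 1-x$ carrying $I^{\al}$, ${}_{0}H^{\al}$ to $I^{\al}_{-}$, ${}^{0}H^{\al}$) is correct and matches the paper's own remark about the change of variables.
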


\begin{coro}\label{eqc}
For $\al, \beta \in (0,1)$ such that $0< \al+\beta \leq 1$ we have
\[
I^{\beta}: {}_{0}H^{\al}(0,1)\rightarrow {}_{0}H^{\al+\beta}(0,1).
\]
\end{coro}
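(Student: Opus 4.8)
The plan is to deduce the claim directly from the isomorphism property in Proposition \ref{eq}, bridged by the classical additivity law $I^{\beta}\ia = I^{\al+\beta}$ for Riemann--Liouville fractional integrals on $L^{1}$.

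First I would note that, on $\ld$, the operators $\ia$ and $\p^{\al}$ are mutually inverse: the identity $\p^{\al}\ia f = \poch I^{1-\al}\ia f = \poch I^{1}f = f$ holds for every $f \in \ld$, so $\p^{\al}\ia = \mathrm{id}_{\ld}$, and since Proposition \ref{eq} guarantees that both maps are bijections between $\ld$ and ${}_{0}H^{\al}(0,1)$, it follows that $\ia\p^{\al} = \mathrm{id}$ on ${}_{0}H^{\al}(0,1)$ as well. Then, given $u \in {}_{0}H^{\al}(0,1)$, I set $f := \p^{\al}u$, which by Proposition \ref{eq} lies in $\ld$ with $\norm{f}_{\ld} \le \ca\norm{u}_{{}_{0}H^{\al}(0,1)}$, and for which $u = \ia f$. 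Consequently
\[
I^{\beta}u = I^{\beta}\ia f = I^{\al+\beta}f .
\]
Because $0 < \al+\beta \le 1$, Proposition \ref{eq} applied with the exponent $\al+\beta$ shows that $I^{\al+\beta}f \in {}_{0}H^{\al+\beta}(0,1)$, with norm bounded by a constant (depending on $\al,\beta$) times $\norm{f}_{\ld}$. Chaining the two estimates yields not only $I^{\beta}u \in {}_{0}H^{\al+\beta}(0,1)$ but also the continuity of $I^{\beta}:{}_{0}H^{\al}(0,1)\to {}_{0}H^{\al+\beta}(0,1)$.

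There is no real obstacle in this argument; it is a formal consequence of Proposition \ref{eq} and the semigroup property of $I^{\gamma}$. The two points that deserve a line of justification are that $\ia$ and $\p^{\al}$ are genuinely inverse to each other (not merely both isomorphisms) and the interchange of integration order used to prove $I^{\beta}\ia f = I^{\al+\beta}f$, which reduces to the Beta-integral identity $\int_{p}^{x}(x-r)^{\beta-1}(r-p)^{\al-1}dr = B(\al,\beta)(x-p)^{\al+\beta-1}$; both are standard (see \cite{Kilbas}, \cite{Samko}).
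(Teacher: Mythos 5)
Your argument is correct and is essentially the paper's own proof: the paper likewise takes $u = I^{\al}w$ with $w \in L^{2}(0,1)$ (granted by the isomorphism of Proposition \ref{eq}), applies the semigroup property $I^{\beta}I^{\al} = I^{\al+\beta}$, and invokes Proposition \ref{eq} at exponent $\al+\beta$. Your extra remarks on the mutual inverseness of $I^{\al}$ and $\p^{\al}$, the Beta-integral identity, and the explicit norm estimates merely make explicit what the paper leaves implicit.
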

\begin{proof}
It is an easy consequence of Proposition \ref{eq}. If $u \in {}_{0}H^{\al}(0,1),$ then there exists $w \in L^{2}(0,1)$ such that $u = I^{\al}w$.
Then,
\[
I^{\beta} u = I^{\beta} I^{\al}w = I^{\al+\beta}w \in {}_{0}H^{\al+\beta}(0,1).
\]
\end{proof}
We finish this section with two propositions which provide us an extension of $\ia$ and $\p^{\al}$ into wider functional spaces. The similar reasoning to the one carried in Proposition \ref{ie} may be found in \cite[Lemma 5]{DPG}.

\begin{prop}\label{ie}
For $\al \in (0,\frac{1}{2})$ the operators $I^{\al}$ and $I^{\al}_{-}$ can be extended to bounded and linear operators from $H^{-\al}(0,1):=(H^{\al}_{0}(0,1))'$ to $L^{2}(0,1)$.
\end{prop}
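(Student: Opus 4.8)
The plan is to obtain both extensions by a duality argument, using only the boundedness of $\ia$ and $\ia_{-}$ on $\ld$ asserted in Proposition \ref{eq}. Fix $\al\in(0,\tfrac12)$. Since $\al<\tfrac12$, the space $C_{0}^{\infty}(0,1)$ is dense in $H^{\al}(0,1)$, so $H^{\al}_{0}(0,1)=H^{\al}(0,1)={}_{0}H^{\al}(0,1)={}^{0}H^{\al}(0,1)$ with equivalent norms, whence $H^{-\al}(0,1)=(H^{\al}_{0}(0,1))'=(H^{\al}(0,1))'$. We take $\ld$ as pivot space, so that $H^{\al}_{0}(0,1)\hookrightarrow\ld\hookrightarrow H^{-\al}(0,1)$ is a Gelfand triple in which the $H^{-\al}$–$H^{\al}_{0}$ duality bracket extends the inner product of $\ld$.

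By Proposition \ref{eq} the operator $\ia\colon\ld\to{}_{0}H^{\al}(0,1)=H^{\al}_{0}(0,1)$ is bounded, hence its adjoint $(\ia)^{*}\colon H^{-\al}(0,1)=(H^{\al}_{0}(0,1))'\to(\ld)'\cong\ld$ is bounded with the same norm. To identify $(\ia)^{*}$ on $\ld$ with $\ia_{-}$, let $f,g\in\ld$; Fubini's theorem (legitimate since $(x-p)^{\al-1}$ is integrable in $p$ and $\ia$ is bounded on $\ld$) gives
\[
\izj(\ia f)(x)\,g(x)\,dx=\frac{1}{\Gamma(\al)}\izj f(p)\int_{p}^{1}(x-p)^{\al-1}g(x)\,dx\,dp=\izj f(p)\,(\ia_{-}g)(p)\,dp,
\]
and $\ia_{-}g\in{}^{0}H^{\al}(0,1)\subset\ld$ by Proposition \ref{eq}. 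Since for $g\in\ld$ the $H^{-\al}$–$H^{\al}_{0}$ bracket against any $\psi\in H^{\al}_{0}(0,1)$ reduces to $(g,\psi)_{\ld}$, we obtain for every $\vf\in\ld$ that $\langle(\ia)^{*}g,\vf\rangle=\langle g,\ia\vf\rangle=(g,\ia\vf)_{\ld}=(\ia_{-}g,\vf)_{\ld}$, i.e. $(\ia)^{*}g=\ia_{-}g$. Hence $(\ia)^{*}$ is a bounded linear extension of $\ia_{-}$ from $H^{-\al}(0,1)$ to $\ld$.

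For $\ia$ itself one proceeds symmetrically: Proposition \ref{eq} also gives that $\ia_{-}\colon\ld\to{}^{0}H^{\al}(0,1)=H^{\al}_{0}(0,1)$ is bounded, so $(\ia_{-})^{*}\colon H^{-\al}(0,1)\to\ld$ is bounded and, by the same integration-by-parts identity, extends $\ia$. Alternatively, one conjugates the result just proved by the reflection $R\colon\ld\to\ld$, $(Rf)(x)=f(1-x)$, which is an isometry of $\ld$ mapping $H^{\al}_{0}(0,1)$ onto itself, hence extends to an isometry of $H^{-\al}(0,1)$, and which satisfies $\ia=R\,\ia_{-}\,R$.

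The only delicate point is the book-keeping of the dual spaces and pairings: one must invoke that for $\al<\tfrac12$ no trace is seen, so that $H^{\al}(0,1)={}_{0}H^{\al}(0,1)={}^{0}H^{\al}(0,1)=H^{\al}_{0}(0,1)$, and that the $\ld$ inner product is consistently extended by the $H^{-\al}$–$H^{\al}_{0}$ bracket; granting this, the statement reduces to boundedness of the Banach-space adjoint together with the fractional integration-by-parts formula displayed above, which is the only computation involved.
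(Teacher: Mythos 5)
Your argument is correct and is essentially the paper's own proof: both rest on the Fubini identity $(\ia u,v)=(u,\ia_{-}v)$, the boundedness of $\ia$, $\ia_{-}\colon \ld\to H^{\al}(0,1)$ from Proposition \ref{eq}, and the identification $(H^{\al}_{0}(0,1))'=(H^{\al}(0,1))'$ for $\al<\tfrac12$. You merely phrase the duality estimate as taking the Banach-space adjoint (plus the optional reflection remark), which is the same computation the paper writes out directly.
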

\begin{proof}
We will prove the claim only for $I^{\al}$ while the proof for $I^{\al}_{-}$ is analogous.
By the Fubini theorem for $u,v \in L^{2}(0,1)$ we obtain
\eqq{
\left(I^{\al}u,v\right) = \left(u,I^{\al}_{-}v\right).
}{fub}
Thus, using Proposition \ref{eq} we may estimate,
\[
\abs{\left(I^{\al}u,v\right) } \leq \norm{I^{\al}_{-}v}_{H^{\al}(0,1)}\norm{u}_{(H^{\al}(0,1))'} \leq \ca \norm{v}_{L^{2}(0,1)}\norm{u}_{H^{-\al}(0,1)},
\]
where we used the fact that for $\al < \frac{1}{2}$ we have $(H^{\al}_{0}(0,1))' = (H^{\al}(0,1))'$. The last inequality finishes the proof.
\end{proof}

\begin{prop}\label{de}
For $\al \in (0,\frac{1}{2})$ the operators $\p^{\al}$ and $\p^{\al}_{-}$ can be extended to bounded and linear operators from $L^{2}(0,1)$ to $H^{-\al}(0,1)$.
\end{prop}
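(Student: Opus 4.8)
The plan is to realise $\p^{\al}$ as a composition of $I^{1-\al}$, which by Proposition \ref{eq} maps $L^{2}(0,1)$ into a fractional Sobolev space of order $1-\al$, followed by the distributional derivative $\frac{d}{dx}$, which lowers the Sobolev order by one; the product of these two mapping properties gives precisely $L^{2}(0,1)\to H^{-\al}(0,1)$.

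Concretely, by Definition \ref{fracdef} we have $\p^{\al}=\frac{d}{dx}\circ I^{1-\al}$ on regular functions. Since $\al\in(0,\frac12)$, we have $1-\al\in(\frac12,1)$, so Proposition \ref{eq} gives that $I^{1-\al}$ is bounded from $L^{2}(0,1)$ into ${}_{0}H^{1-\al}(0,1)$, and the latter embeds continuously into $H^{1-\al}(0,1)$. It then remains to check that the distributional differentiation operator $\frac{d}{dx}$ is bounded from $H^{1-\al}(0,1)$ into $H^{-\al}(0,1)=(H^{\al}_{0}(0,1))'$. For $w\in H^{1-\al}(0,1)$ one extends $w$ to $W\in H^{1-\al}(\mathbb{R})$ with comparable norm; then $W'\in H^{-\al}(\mathbb{R})$ with $\norm{W'}_{H^{-\al}(\mathbb{R})}\leq\norm{W}_{H^{1-\al}(\mathbb{R})}$ (on the Fourier side $|\xi|(1+\xi^{2})^{-\al/2}\leq(1+\xi^{2})^{(1-\al)/2}$), and restricting $W'$ to $(0,1)$ yields the distributional derivative of $w$, with $\norm{w'}_{H^{-\al}(0,1)}\leq\norm{W'}_{H^{-\al}(\mathbb{R})}$.

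Composing the two bounded maps gives a bounded operator $L^{2}(0,1)\to H^{-\al}(0,1)$ which agrees with $\p^{\al}$ on $AC[0,1]$, a dense subspace of $L^{2}(0,1)$, hence is the unique continuous extension. The operator $\p^{\al}_{-}$ is handled identically, using $\p^{\al}_{-}=-\frac{d}{dx}\circ I^{1-\al}_{-}$ together with the half of Proposition \ref{eq} stated for $I^{1-\al}_{-}$ (equivalently, by applying the change of variables $x\mapsto 1-x$ to the case of $\p^{\al}$). The main obstacle, and the only point where $\al<\frac12$ is essential, is the boundedness of $\frac{d}{dx}\colon H^{1-\al}(0,1)\to H^{-\al}(0,1)$ with the correct identification of the target: precisely for $\al<\frac12$ one has $H^{\al}_{0}(0,1)=H^{\al}(0,1)$ (equivalently, extension by zero is bounded $H^{\al}_{0}(0,1)\to H^{\al}(\mathbb{R})$), so that $H^{-\al}(0,1)$ is unambiguously the dual of $H^{\al}(0,1)$ and no boundary or trace contributions enter the pairing $\langle w',\phi\rangle=-\int_{0}^{1}w\,\phi'\,dx$; for $\al\geq\frac12$ such terms would obstruct the argument. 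Everything else is a direct quotation of Proposition \ref{eq} and a routine density argument.
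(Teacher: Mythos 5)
Your argument is correct, but it takes a genuinely different route from the paper. You factor the extension as $\p^{\al}=\frac{d}{dx}\circ I^{1-\al}$, use Proposition \ref{eq} to see that $I^{1-\al}:L^{2}(0,1)\rightarrow H^{1-\al}(0,1)$ is bounded, and then prove by extension to $\mathbb{R}$ and a Fourier-multiplier bound that $\frac{d}{dx}:H^{1-\al}(0,1)\rightarrow H^{-\al}(0,1)$ is bounded, the hypothesis $\al<\frac{1}{2}$ entering through $H^{\al}_{0}(0,1)=H^{\al}(0,1)$, so that the restriction of $W'$ (defined by pairing against zero-extensions of test functions) lands in the correct dual space. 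The paper argues instead by duality: for $f,v\in H^{\al}(0,1)$ it writes, using Proposition \ref{eq} together with the Fubini identity (\ref{fub}), $\left(\p^{\al}f,v\right)=\left(f,\p^{\al}_{-}v\right)$, bounds the right-hand side by $\ca\norm{f}_{L^{2}(0,1)}\norm{v}_{H^{\al}(0,1)}$ via the isomorphism property of $\p^{\al}_{-}$ on ${}^{0}H^{\al}(0,1)=H^{\al}(0,1)$, and concludes $\p^{\al}f\in (H^{\al}(0,1))'=H^{-\al}(0,1)$, invoking $\al<\frac{1}{2}$ only for this identification. The paper's route stays entirely inside the toolkit already assembled (Proposition \ref{eq} and (\ref{fub})) and has the side benefit of exhibiting the weak adjoint identity $(\p^{\al}f,v)=(f,\p^{\al}_{-}v)$ explicitly; your route is structurally transparent (gain $1-\al$, lose $1$), and your key lemma, boundedness of $\frac{d}{dx}:H^{s}(0,1)\rightarrow H^{s-1}(0,1)$ for $s\neq\frac{1}{2}$, is precisely Remark 12.8 of \cite{Lions}, which the paper itself quotes in the proof of Lemma \ref{rown}, so you could cite it and skip the extension/Fourier computation. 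Only cosmetic caveats: the inequality $\norm{w'}_{H^{-\al}(0,1)}\leq\norm{W'}_{H^{-\al}(\mathbb{R})}$ should carry the norm of the zero-extension operator as a constant, and the final density step is cleanest if you check agreement of the two operators on $H^{\al}(0,1)={}_{0}H^{\al}(0,1)$, where $\p^{\al}$ already takes values in $L^{2}(0,1)$ by Proposition \ref{eq}.
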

\begin{proof}
As in the previous proposition, we will prove the statement only for $\p^{\al}$, because for $\p^{\al}_{-}$ the proof is analogous.
Let us assume that $f,v \in H^{\al}(0,1)$. (We recall that for $\al \in (0, \frac{1}{2})$ the space $H^{\al}(0,1)$ coincides with $^{0}H^{\al}(0,1)$ and $_{0}H^{\al}(0,1))$. Then, from Proposition \ref{eq}, there exist $g \in L^{2}(0,1)$ such that $\p^{\al}f = g$ and $w \in H^{\al}(0,1)$ such that $v = \ia_{-} w$. Thus, we have
\[
\left(\p^{\al}f,v\right) = \left(g,I^{\al}_{-}w\right) = \left(I^{\al}g, w\right) =\left(f,\p^{\al}_{-}v\right).
\]
Making use of Proposition \ref{eq} one more time, we may estimate
\[
\abs{\left(\p^{\al}f,v\right) } \leq \norm{f}_{L^{2}(0,1)}\norm{\p^{\al}_{-}v}_{L^{2}(0,1)} \leq \ca \norm{f}_{L^{2}(0,1)}\norm{v}_{H^{\al}(0,1)},
\]
Thus $\p^{\al}f \in (H^{\al}(0,1))'$ which coincides with $H^{-\al}(0,1)$ for $\al \in (0,\frac{1}{2})$ and the proof is finished.
\end{proof}

\section{Operator $\poch \da$ as a generator of an analytic semigroup}

In this section, in order to make an argument transparent, we deal with the case $p\equiv 1$. The case of arbitrary $p$ is slightly more technical and it will be considered in the next chapter. We will proceed as follows. Firstly, we will characterize the domain of $\poch \da$ in $L^{2}(0,1)$. Then, we will show that $\poch \da$ generates a $C_{0}$~-semigroup of contractions. Finally, we will prove that, by appropriate estimate of the resolvent operator, this semigroup may be extended to an analytic semigroup on a sector of complex plane.

We may note that, just by definition $\poch \da u = \poch I^{1-\al} u_{x} = \p^{\al}u_{x},$ whenever one of sides of the identity is meaningful.
By Proposition \ref{eq} the domain of $\p^{\al}$ in $\ld$ coincides with $ {}_{0}H^{\al}(0,1).$ Thus, we may consider the domain of $\poch \da$ as $\{u\in H^{1+\al}(0,1): u_{x} \in {}_{0}H^{\al}(0,1)\}$. Taking into account the boundary condition in~(\ref{aa}) we finally define the domain of $\poch \da$ as
\eqq{D(\poch \da) \equiv \dd := \{u \in H^{1+\al}(0,1) :u_{x} \in {}_{0}H^{\al}(0,1), \hd  u(1)=0\}.}{dziedzina}
We may equip $\dd$ with the graph norm
\[
\norm{f}_{\dd} = \norm{f}_{H^{1+\al}(0,1)} \m{ for } \al \in (0,1) \setminus \{\frac{1}{2}\}
\]
and
\[
\norm{f}_{\dd} = \left( \norm{f}_{H^{\frac{3}{2}}(0,1)}^{2} + \izj \frac{\abs{u_{x}(x)}^{2}}{x}dx\right)^{\frac{1}{2}} \m{ for }\al = \frac{1}{2}.
\]
\begin{theorem}\label{C0}
Operator $\poch \da: \dd \subseteq\ld \rightarrow \ld$ generates a $C_{0}$ semigroup of contractions.
\end{theorem}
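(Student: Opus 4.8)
The plan is to apply the Lumer--Phillips theorem, so it suffices to show that $\poch \da$ is densely defined, dissipative, and that $\lambda I - \poch \da$ maps $\dd$ onto $\ld$ for some (equivalently, every) $\lambda > 0$. Density is immediate: if $\vf \in C_{c}^{\infty}(0,1)$ then $\vf \in H^{1+\al}(0,1)$, $\vf(1)=0$, and $\vf_{x}$ has compact support in $(0,1)$, so $\vf_{x} \in {}_{0}H^{\al}(0,1)$ for every $\al \in (0,1)$; hence $C_{c}^{\infty}(0,1) \subseteq \dd$, which is dense in $\ld$.

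For dissipativity, fix $u \in \dd$ and use $\poch \da u = \p^{\al} u_{x} = \poch\!\left(I^{1-\al} u_{x}\right)$. By Corollary \ref{eqc} (with $\beta = 1-\al$) the function $v := \da u = I^{1-\al} u_{x}$ lies in ${}_{0}H^{1}(0,1) \subseteq AC[0,1]$, so $v(0)=0$, while $u(1)=0$ by the definition of $\dd$. Since $u,v \in H^{1}(0,1)$, integration by parts gives
\[
(\poch \da u, u)_{\ld} = \int_{0}^{1} v'(x)\,u(x)\,dx = -\int_{0}^{1} v(x)\,u_{x}(x)\,dx = -\int_{0}^{1} v(x)\,\p^{1-\al} v(x)\,dx,
\]
where the last equality uses $u_{x} = \p^{1-\al} v$ (apply $\p^{1-\al}$ to $v = I^{1-\al} u_{x}$ and invoke Proposition \ref{eq}). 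Since $v \in AC[0,1]$, the energy estimate (\ref{AH}) applied with exponent $1-\al$ yields $\int_{0}^{1} v\,\p^{1-\al} v\,dx \geq c\norm{v}_{H^{(1-\al)/2}(0,1)}^{2} \geq 0$, hence $(\poch \da u, u)_{\ld} \leq 0$.

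The surjectivity of $\lambda I - \poch \da$ is the main obstacle. Fix $\lambda>0$ and $f\in\ld$. If $u\in\dd$ solves $\lambda u - \p^{\al} u_{x} = f$, apply $I^{\al}$ and use that $I^{\al}$ and $\p^{\al}$ are mutually inverse isomorphisms between ${}_{0}H^{\al}(0,1)$ and $\ld$ (Proposition \ref{eq}) to obtain $u_{x} = \lambda I^{\al} u - I^{\al} f$; integrating from $x$ to $1$ and using $u(1)=0$ turns the problem into the second-kind equation
\[
\bigl(I + \lambda\, I^{1}_{-} I^{\al}\bigr)\,u = I^{1}_{-} I^{\al} f \quad\text{in } \ld, \qquad (I^{1}_{-} g)(x) := \int_{x}^{1} g(p)\,dp .
\]
Conversely, any $u\in\ld$ solving this equation has the form $u = I^{1}_{-} I^{\al} h$ with $h := f - \lambda u$, whence $u(1)=0$ and $u_{x} = -I^{\al} h \in {}_{0}H^{\al}(0,1)$, so $u\in\dd$ and $\p^{\al} u_{x} = -h = \lambda u - f$; thus the two problems are equivalent. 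Now $I^{1}_{-} I^{\al}$ is compact on $\ld$, since $I^{\al}:\ld\to\ld$ is bounded (Proposition \ref{eq}) and $I^{1}_{-}$ maps $\ld$ into $H^{1}(0,1)\hookrightarrow\hookrightarrow\ld$. By the Fredholm alternative it remains to prove injectivity of $I + \lambda\, I^{1}_{-} I^{\al}$: if $(I + \lambda\, I^{1}_{-} I^{\al})u = 0$, the reverse computation places $u$ in $\dd$ with $\poch \da u = \lambda u$, so dissipativity forces $\lambda\norm{u}_{\ld}^{2} = (\poch \da u, u)_{\ld}\leq 0$, i.e. $u=0$. Hence the second-kind equation is solvable, $\mathrm{Ran}(\lambda I - \poch \da) = \ld$, and Lumer--Phillips gives the claim.

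The genuinely delicate part is the surjectivity step, where the real work is the bookkeeping relating the differential problem on $\dd$ to the compact integral equation; once that is set up, the Fredholm alternative together with the already-established dissipativity closes the argument. (Incidentally, taking $\lambda=0$ in the same computation shows that $\poch \da:\dd\to\ld$ is a bijection with compact inverse $-\,I^{1}_{-} I^{\al}$, which gives an alternative route to the required invertibility of $\lambda I - \poch \da$ via the spectral theory of compact operators.)
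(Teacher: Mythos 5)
Your proposal is correct, and while the Lumer--Phillips frame and the dissipativity computation essentially coincide with the paper's (you integrate by parts using $v=\da u=I^{1-\al}u_x\in{}_{0}H^{1}(0,1)$, $v(0)=0$, $u(1)=0$, and then invoke the energy estimate (\ref{AH}) of order $1-\al$), your treatment of the range condition is genuinely different. One small repair first: in the complex space $\ld$ you must bound $\Re\left(\poch\da u,u\right)$, and your computation drops the conjugate on $u$; the fix is routine --- run the same argument separately on $\Re u$ and $\Im u$ (both remain in $\dd$), exactly as the paper does --- and note the paper extracts from this step the quantitative bound (\ref{koer0}), whereas for the present theorem your nonnegativity suffices. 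For surjectivity, the paper solves $\lambda u-\poch\da u=g$ explicitly: it rewrites it as the Volterra identity $u=\uz+\lambda I^{\al+1}u-I^{\al+1}g$, sums the Neumann series into Mittag-Leffler functions, and then chooses $\uz$ to enforce $u(1)=0$, which requires the zero-distribution result of Proposition \ref{MLp} to guarantee $E_{\al+1}(\lambda)\neq0$; this yields an explicit resolvent formula valid on the whole sector $\vartheta_{\al}$. You instead reduce to the second-kind equation $\left(I+\lambda I^{1}_{-}I^{\al}\right)u=I^{1}_{-}I^{\al}f$, check (correctly) the equivalence with the problem in $\dd$ (in particular $u(1)=0$ and $u_x=-I^{\al}h\in{}_{0}H^{\al}(0,1)$), observe that $I^{1}_{-}I^{\al}$ is compact on $\ld$, and close with the Fredholm alternative plus dissipativity for injectivity. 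Your route is softer and more elementary: it avoids the special-function machinery and the Popov--Sedletskii theorem, and it treats only real $\lambda>0$, which is all Lumer--Phillips needs; your closing remark that $-I^{1}_{-}I^{\al}$ is a compact inverse of $\poch\da$ is also valid and gives the resolvent set $\supseteq(0,\infty)$ directly. What you forgo is the explicit Mittag-Leffler representation of the resolvent on the larger sector recorded in the paper; this extra information is not needed later, since the analyticity proof of Theorem \ref{anal} only uses $(-\infty,0]\subseteq\rho(-\poch\da)$, which follows from the contraction semigroup in either approach.
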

\begin{proof}

We will prove Theorem \ref{C0} by applying the Lumer-Philips theorem \cite[Ch.~1, Theorem 4.3]{Pazy}.
Firstly, we see that $\poch \da$ is densely defined.
In order to satisfy assumptions of Lumer-Philips theorem we need to show in addition that $-\poch \da$ is accretive and that $R(I - \poch\da) = \ld.$ In order to prove that $-\poch \da$ is accretive we take $u\in \dd$, using integration by parts and Proposition \ref{skladanie}, we get
\[
\Re \left(-\poch \da u, u\right) = -\Re \izj (\poch \da u)(x) \cdot \overline{u(x)}dx
\]
\[
= \izj \da \Re u(x)\cdot \poch \Re u(x)dx + \izj \da \Im u(x)\cdot \poch \Im u(x)dx
\]
\[
 = \izj \da \Re u(x)\cdot \p^{1-\al}\da \Re u(x)dx + \izj \da \Im u(x)\cdot \p^{1-\al}\da \Im u(x)dx.
 \]
 Since $u_{x} \in  {}_{0}H^{\al}(0,1),$ then from Corollary \ref{eqc} we know that $\da u = I^{1-\al}u_{x} \in {}_{0}H^{1}(0,1).$ We may apply inequality (\ref{AH}) with $w=\da \Re u$ and $w= \da \Im u$ to obtain
\[
\Re \left(-\poch \da u, u\right)  \geq \ca \norm{\da u}_{H^{\frac{1-\al}{2}}(0,1)}^{2} \geq \ca \norm{\p^{\frac{1-\al}{2}}\da u}_{L^{2}(0,1)}^{2}
\]
\eqq{
 = \ca \norm{D^{\frac{1+\al}{2}}\ u}_{L^{2}(0,1)}^{2},
}{koer0}
where in the second inequality we used Proposition \ref{eq} and the equality follows from Proposition \ref{skladanie}. Here $\ca > 0$ denotes a generic constant dependent on $\al$.\\
Now, we would like to show that $R(I - \poch\da) = \ld.$ In fact, we are able to show something more. We will state the result in the next lemma.
\begin{lem}
For every $\lambda \in \mathbb{C}$ belonging to the sector
\eqq{
\vartheta_{\al}:=\{z \in \mathbb{C}:\abs{\arg z} \leq \frac{\pi (\al+1)}{2}\} \cup \{0\}
}{sektor}
there holds
\[
R(\lambda I - \poch\da) = \ld.
\]
\end{lem}
\begin{proof}
To prove the lemma we fix  $g \in \ld$ and $\lambda$ belonging to $\vartheta_{\al}$. We must prove that there exists $u \in \dd$ such that
\eqq{
\lambda u - \poch \da u = g.
}{zwyczajne}
We would like to calculate the solution directly. To that end, we will firstly solve equation (\ref{zwyczajne}) with the arbitrary boundary condition $u(0) = \uz \in \mathbb{C}.$ Then, we will choose $\uz$ which will guarantee the zero condition at the other endpoint of the interval.  We note that if we search for a solution in $\{f \in H^{1+\al}(0,1): f_{x} \in {}_{0}H^{\al}(0,1) \}$, then equation (\ref{zwyczajne}) is equivalent to
\eqq{
u = \uz  +\lambda I^{\al+1}u - I^{\al+1}g.
}{calkowe1}
Indeed, if we apply $\ia$ to both sides of (\ref{zwyczajne}), recall that $\poch \da u = \p^{\al}u_{x}$ and assume that $u_{x} \in {}_{0}H^{\al}(0,1)$, we obtain
\[
u_{x} = \lambda \ia u - \ia g.
\]
Integrating this equality we arrive at (\ref{calkowe1}). On the other hand, if we assume that $u\in L^{2}(0,1)$ solves (\ref{calkowe1}), then by Proposition \ref{eq} it automatically belongs to $\{f \in H^{1+\al}(0,1): f_{x} \in {}_{0}H^{\al}(0,1) \}$ and to obtain (\ref{zwyczajne}) it is enough to apply  $\p^{\al}\poch$ to~(\ref{calkowe1}).\\
Thus, we are going to solve (\ref{calkowe1}).
We apply to (\ref{calkowe1}) the operator $I^{\al+1}$ and we obtain
\[
u(x) = \uz - (I^{\al+1}g)(x) + \lambda (I^{\al+1} \uz)(x) + \lambda^{2}(I^{2(\al+1)}u)(x) - \lambda (I^{2(\al+1)}g)(x).
\]
Iterating this procedure $n$ times we arrive at
\eqq{
u(x) = \uz\sum_{k=0}^{n}(\lambda^{k} I^{k(\al+1)}1) - \sum_{k=0}^{n}\lambda^{k}(I^{(k+1)(\al+1)}g)(x) + \lambda^{n+1}(I^{(n+1)(\al+1)}u)(x).
}{don}
We will show, that the last expression tends to zero as $n\rightarrow \infty$.
Indeed, we may note that, since we search for the solutions in $H^{1+\al}(0,1) \subseteq L^{\infty}(0,1)$ and due to the presence of the $\Gamma$-function in the denominator we have
\[
\abs{\lambda^{n}(I^{n(\al+1)}u)(x)} \leq \norm{u}_{L^{\infty}(0,1)}\frac{\abs{\lambda}^{n} x^{(\al +1)n}}{\Gamma((\al+1)n+1)} \leq \frac{\norm{u}_{L^{\infty}(0,1)}\abs{\lambda}^{n} }{\Gamma((\al+1)n+1)}\rightarrow 0 \textrm{ as } n\rightarrow \infty
\]
for each $\lambda \in \mathbb{C}$ uniformly with respect to $x\in [0,1]$. Thus, passing to the limit with $n$ in (\ref{don}) we obtain the formula
\eqq{
u(x) = \uz\sum_{k=0}^{\infty}(\lambda^{k} I^{k(\al+1)}1) - \sum_{k=0}^{\infty}\lambda^{k}(I^{(k+1)(\al+1)}g)(x).
}{szereg1}
We will show that both series in (\ref{szereg1}) are uniformly convergent. We may estimate
\[
\abs{\lambda^{k}(I^{(\al+1)(k+1)}g)(x)} \leq \frac{\abs{\lambda}^{k}}{\Gamma((\al+1)(k+1))}\izx (x-s)^{(\al+1)(k+1)-1} \abs{g(s)}ds
\]
\[
 \leq \norm{g}_{L^{2}(0,1)}\frac{\abs{\lambda}^{k}}{\Gamma((\al+1)(k+1))}\left(\izx (x-s)^{2(\al+1)(k+1)-2}ds\right)^{\frac{1}{2}}
\]
\[
  \leq \norm{g}_{L^{2}(0,1)}\frac{\abs{\lambda}^{k}}{\Gamma((\al+1)(k+1))}\frac{x^{(\al+1)(k+1)-\frac{1}{2}}}{\sqrt{2(\al+1)(k+1)-1}}.
\]
If we denote the last expression by $a_{k}$ we may calculate
\[
\frac{a_{k+1}}{a_{k}} = \frac{\abs{\lambda} x^{\al+1}\sqrt{2(\al+1)(k+1)-1}\Gamma((\al+1)(k+1))}{\sqrt{2(\al+1)(k+2)-1}\Gamma((\al+1)(k+1) + (\al+1))}
\]
and so
\[
\frac{a_{k+1}}{a_{k}} \leq \abs{\lambda} x^{\al+1} \frac{B((\al+1),(k+1)(\al+1))}{\Gamma(\al+1)} \rightarrow 0 \textrm { as } k\rightarrow \infty
\]
for every $\lambda \in \mathbb{C}$ and uniformly with respect to $x\in [0,1]$ and from comparison criterion and the d'Alembert criterion the series is uniformly convergent.  With the first series in (\ref{szereg1}) we may deal even more easily. Now, we would like to compute the sum of the series. By direct calculations, we see that
\eqq{
\sum_{k=0}^{\infty}\lambda^{k}(I^{(\al+1)k}1)(x) = E_{\al+1}(\lambda x^{\al+1}),
}{mla}
where by $E_{\al+1}$ we denote the Mittag-Leffler function. For the definition of Mittag-Leffler function we refer to Proposition \ref{MLp} from the Appendix.
To calculate the sum of the second series, we apply the definition of fractional integral (see Definition \ref{fracdef})
\[
\sum_{k=0}^{\infty}\lambda^{k}(I^{(\al+1)(k+1)}g)(x) = \sum_{k=0}^{\infty}\lambda^{k}\izx g(s)\frac{(x-s)^{(\al+1)k+\al}}{\Gamma((\al+1)k+\al+1)}ds.
\]
In order to interchange the order of integration and summation, we will firstly consider the finite sum and then we will pass to the limit,
\[
\sum_{k=0}^{\infty}\lambda^{k}\izx g(s)\frac{(x-s)^{(\al+1)k+\al}}{\Gamma((\al+1)k+\al+1)}ds
= \lim_{n\rightarrow \infty}\sum_{k=0}^{n}\lambda^{k}\izx g(s)\frac{(x-s)^{(\al+1)k+\al}}{\Gamma((\al+1)k+\al+1)}ds
\]
\[
=\lim_{n\rightarrow \infty}\izx g(s)\sum_{k=0}^{n}\lambda^{k}\frac{(x-s)^{(\al+1)k+\al}}{\Gamma((\al+1)k+\al+1)}ds.
\]
We would like to apply the Lebesgue dominated convergence theorem, thus we need to indicate the majorant. We may estimate as follows
\[
\abs{g(s)\sum_{k=0}^{n}\lambda^{k}\frac{(x-s)^{(\al+1)k+\al}}{\Gamma((\al+1)k+\al+1)}} \leq \abs{g(s)}\sum_{k=0}^{\infty}\frac{\abs{\lambda}^{k}}{\Gamma((\al+1)k+\al+1)}
\]
\[
= \abs{g(s)}E_{\al+1,\al+1}(\abs{\lambda})
\]
and the last function is integrable because $g \in L^{2}(0,1)$. Hence, applying the Lebesgue dominated convergence theorem we arrive at
\[
\sum_{k=0}^{\infty}\lambda^{k}(I^{(\al+1)(k+1)}g)(x) =g*x^{\al}\sum_{k=0}^{\infty}\frac{(\lambda x^{\al+1})^{k}}{\Gamma((\al+1)k + (\al+1))}.
\]
\no Here and in whole paper by $*$ we denote the convolution on $(0,\infty)$, i.e. $f*g = \izx f(p)g(x-p)dp$. Finally, using this result together with (\ref{mla}) in (\ref{szereg1}) we obtain that the function $u$ defined by the formula
\eqq{
u(x) = \uz E_{\al+1}(\lambda x^{\al+1}) - g*x^{\al}E_{\al+1,\al+1}(\lambda x^{\al+1})
}{init}
is a solution to (\ref{zwyczajne}) with a boundary condition $u(0)= \uz$.
It remains to solve equation (\ref{zwyczajne}) with the zero condition in the endpoint of the interval. For this purpose, we take $x=1$ in (\ref{init}) and we obtain
\[
0=u(1) = \uz E_{\al+1}(\lambda) -   (g*y^{\al}E_{\al+1,\al+1}(\lambda y^{\al+1}))(1)
\]
and we may calculate $\uz$
\[
\uz =  (E_{\al+1}(\lambda))^{-1} (g*y^{\al}E_{\al+1,\al+1}(\lambda y^{\al+1}))(1).
\]
$\uz$ is well defined because, taking $\nu=\al+1, \mu=1$ in Proposition \ref{MLp} from Appendix, we obtain that $E_{\al+1}(\lambda)>0$ for $\lambda$ belonging to the sector $\vartheta$. Placing this $\uz$ in the formula (\ref{init}) we obtain the solution for (\ref{calkowe1}) which belongs to $\dd$
\[
u(x) =(E_{\al+1}(\lambda))^{-1} (g*y^{\al}E_{\al+1,\al+1}(\lambda y^{\al+1}))(1)E_{\al+1}(\lambda x^{\al+1}) - g*x^{\al}E_{\al+1,\al+1}(\lambda x^{\al+1}).
\]
That way we proved the lemma.
\end{proof}
In particular, we have shown that $R(I-\poch \da) = \ld$. This, together with the accretive property of $-\poch\da$ allows us to apply the Lumer-Phillips theorem which finishes the proof of Theorem \ref{C0}.
\end{proof}
\begin{rem}
  We note that the above theorem as well as other results from this paper are valid for the space interval $[0,L]$ for every fixed $0<L < \infty$ and $L=1$ was chosen just to simplify the notation.
\end{rem}
It remains to prove that the semigroup generated by $\poch\da$ can be extended to an analytic semigroup on a sector of complex plane.
\no Before we will prove that result, we need to formulate two auxiliary lemmas. A similar reasoning to the one carried in Lemma \ref{rown} may be found in \cite[Lemma 6]{Karkulik}.
\begin{lem}\label{rown}
The formulas $\norm{D^{\frac{1+\al}{2}}\ u}_{L^{2}(0,1)}$ and $\norm{u}_{H^{\frac{1+\al}{2}}(0,1)}$ define equivalent norms on~$\dd$.

\end{lem}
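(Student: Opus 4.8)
The plan is to put $\beta:=\frac{1+\al}{2}$, which belongs to $(\frac12,1)$ because $\al\in(0,1)$, and to work from the identities
\[
D^{\frac{1+\al}{2}}u = I^{1-\beta}u_{x} = \p^{\beta}\big(u-u(0)\big),\qquad u\in\dd ,
\]
both of which follow from Definition \ref{fracdef} once we note $\dd\subseteq H^{1+\al}(0,1)\subseteq AC[0,1]$. Since $\beta>\frac12$ and $u(0)-u(0)=0$, the function $u-u(0)$ lies in ${}_{0}H^{\beta}(0,1)$ with $\norm{u-u(0)}_{{}_{0}H^{\beta}(0,1)}=\norm{u-u(0)}_{H^{\beta}(0,1)}$. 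Hence Proposition \ref{eq}, applied with exponent $\beta$, gives a constant $C>0$ (depending only on $\al$) with
\[
C^{-1}\norm{u-u(0)}_{H^{\beta}(0,1)} \;\le\; \norm{D^{\frac{1+\al}{2}}u}_{\ld} \;\le\; C\,\norm{u-u(0)}_{H^{\beta}(0,1)} .
\]
So it only remains to compare $\norm{u}_{H^{\beta}(0,1)}$ with $\norm{u-u(0)}_{H^{\beta}(0,1)}$, i.e.\ to control the constant $u(0)$.

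Because the constant function has $\norm{1}_{H^{\beta}(0,1)}=1$, the triangle inequality gives both $\norm{u}_{H^{\beta}(0,1)}\le\norm{u-u(0)}_{H^{\beta}(0,1)}+\abs{u(0)}$ and $\norm{u-u(0)}_{H^{\beta}(0,1)}\le\norm{u}_{H^{\beta}(0,1)}+\abs{u(0)}$. Since $\beta>\frac12$, the one-dimensional embedding $H^{\beta}(0,1)\hookrightarrow C[0,1]$ yields $\abs{u(0)}\le C\norm{u}_{H^{\beta}(0,1)}$; combined with the previous display this already produces $\norm{D^{\frac{1+\al}{2}}u}_{\ld}\le C\norm{u}_{H^{\beta}(0,1)}$. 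The whole lemma is therefore reduced to the single estimate $\abs{u(0)}\le C\,\norm{D^{\frac{1+\al}{2}}u}_{\ld}$, which, fed back into the two triangle inequalities and the left-hand inequality above, gives $\norm{u}_{H^{\beta}(0,1)}\le C\norm{D^{\frac{1+\al}{2}}u}_{\ld}$.

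To prove this last estimate I would use the boundary condition $u(1)=0$ to write $u(0)=-\izj u_{x}(s)\,ds=-\left(u_{x},1\right)$, and then the Fubini identity from the proof of Proposition \ref{ie}, $\left(I^{1-\beta}f,g\right)=\left(f,I^{1-\beta}_{-}g\right)$, with $g$ chosen so that $I^{1-\beta}_{-}g\equiv1$. A short Beta-integral computation shows that $g(x)=\frac{(1-x)^{\beta-1}}{\Gamma(\beta)}$ works, and --- this is the crucial point --- $g\in\ld$ exactly because $\beta>\frac12$, i.e.\ because $\al>0$. Therefore
\[
\abs{u(0)}=\abs{\left(u_{x},I^{1-\beta}_{-}g\right)}=\abs{\left(I^{1-\beta}u_{x},g\right)}=\abs{\left(D^{\frac{1+\al}{2}}u,g\right)}\le\norm{g}_{\ld}\,\norm{D^{\frac{1+\al}{2}}u}_{\ld}.
\]

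The applications of Proposition \ref{eq} and of the Sobolev embedding are routine; the real content, and the only place where the boundary condition $u(1)=0$ and the strict inequality $\beta>\frac12$ are used, is the estimate for $\abs{u(0)}$ --- in particular one has to exhibit the dual function $g$ above and verify its square-integrability, which is precisely where the argument would break down for $\al=0$.
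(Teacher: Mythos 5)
Your proof is correct, but it follows a genuinely different route than the paper. The paper proves both inequalities by descending to the negative-order space $H^{\frac{\al-1}{2}}(0,1)$: for the upper bound it writes $D^{\frac{1+\al}{2}}u=I^{\frac{1-\al}{2}}u_{x}$, applies the extension of $I^{\frac{1-\al}{2}}$ to $H^{-\frac{1-\al}{2}}(0,1)$ (Proposition \ref{ie}) and the boundedness of $\poch$ from $H^{\frac{1+\al}{2}}$ to $H^{\frac{\al-1}{2}}$; for the lower bound it uses the factorization $u=-I^{\frac{1+\al}{2}}_{-}I^{\frac{1-\al}{2}}_{-}u_{x}$ coming from $u(1)=0$, Proposition \ref{eq} for the right-sided operators, and then Propositions \ref{ie} and \ref{de} to pass from $\norm{I^{\frac{1-\al}{2}}_{-}u_{x}}_{\ld}$ back to $\norm{I^{\frac{1-\al}{2}}u_{x}}_{\ld}$. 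You instead stay at the level $\beta=\frac{1+\al}{2}>\frac12$, use the identity $D^{\beta}u=\p^{\beta}(u-u(0))$ together with the isomorphism $\p^{\beta}:{}_{0}H^{\beta}(0,1)\to\ld$ of Proposition \ref{eq}, and reduce the whole lemma to controlling the constant $\abs{u(0)}$: one way by the embedding $H^{\beta}(0,1)\hookrightarrow C[0,1]$, the other way by the explicit dual function $g(x)=(1-x)^{\beta-1}/\Gamma(\beta)$ with $I^{1-\beta}_{-}g\equiv 1$, which lies in $\ld$ precisely because $\al>0$, combined with $u(0)=-\izj u_{x}\,dx$ from the boundary condition. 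Your checks are sound: $u-u(0)\in{}_{0}H^{\beta}(0,1)$ since $\beta>\frac12$, the Beta-integral computation for $g$ is right, and the Fubini identity (\ref{fub}) applies since $u_{x},g\in\ld$. What the two approaches buy: the paper's argument reuses the negative-order machinery (Propositions \ref{ie}, \ref{de}) that it needs again later (e.g. in Lemma \ref{ajsz}), and treats left- and right-sided operators symmetrically; yours avoids negative-order Sobolev spaces and the Lions--Magenes mapping property of $\poch$ altogether, makes the role of $u(1)=0$ and of the strict positivity of $\al$ completely explicit, and yields an explicit constant $\norm{g}_{\ld}=1/(\Gamma(\frac{1+\al}{2})\sqrt{\al})$, which degenerates as $\al\to 0$, consistent with where the statement should fail. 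One trivial remark: $\norm{1}_{H^{\beta}(0,1)}$ need not equal $1$ under the Lions--Magenes norm convention, but only its finiteness is used, so this does not affect the argument.
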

 \begin{proof}
Firstly, we will show that there exists a positive constant $c$ such that
\[
\norm{D^{\frac{1+\al}{2}}\ u}_{L^{2}(0,1)} \leq c \norm{u}_{H^{\frac{1+\al}{2}}(0,1)}.
\]
Using Proposition \ref{ie} we may write
\[
\norm{D^{\frac{1+\al}{2}}\ u}_{L^{2}(0,1)}  = \norm{I^{\frac{1-\al}{2}} u_{x}}_{L^{2}(0,1)} \leq c \norm{ u_{x}}_{H^{\frac{\al-1}{2}}(0,1)}.
\]
Due to Remark 12.8. \cite{Lions} we know that $\poch$ is a bounded and linear operator from $H^{s}(0,1)$ to $H^{s-1}(0,1)$ for $s \in [0,1] \setminus \{\frac{1}{2}\}$ thus
\[
\norm{D^{\frac{1+\al}{2}}\ u}_{L^{2}(0,1)}  \leq c \norm{ u}_{H^{\frac{\al+1}{2}}(0,1)}.
\]
To show the opposite inequality we notice that since $u \in \dd$ we have
\[
u(x)=-\int_{x}^{1} u_{x}(s)ds = -I_{-}u_{x} =- I_{-}^{\frac{1+\al}{2}}I_{-}^{\frac{1-\al}{2}}u_{x}
\]
and by Proposition \ref{eq} we may estimate
\[
\norm{ u}_{H^{\frac{\al+1}{2}}(0,1)} = \norm{I^{\frac{1+\al}{2}}_{-}I^{\frac{1-\al}{2}}_{-}u_{x}}_{{}^{0}H^{\frac{\al+1}{2}}(0,1)} \leq c\norm{I^{\frac{1-\al}{2}}_{-}u_{x}}_{L^{2}(0,1)}.
\]
Applying Proposition \ref{ie} and Proposition \ref{de} we may estimate further
\[
\norm{ u}_{H^{\frac{\al+1}{2}}(0,1)} \leq c \norm{ u_{x}}_{H^{\frac{\al-1}{2}}(0,1)} = c\norm{\p^{\frac{1-\al}{2}}I^{\frac{1-\al}{2}} u_{x}}_{H^{\frac{\al-1}{2}}(0,1)} \leq c \norm{I^{\frac{1-\al}{2}} u_{x}}_{L^{2}(0,1)},
\]
which finishes the proof.
\end{proof}
\begin{lem}
For $u \in \dd$ we have
\eqq{\Re(-\poch D^{\al}u,u) \geq \ca \norm{u}_{H^{\frac{1+\al}{2}}(0,1)}^{2}}{koer}
and
\eqq{\abs{(-\poch D^{\al}u,u)} \leq \ba \norm{u}_{H^{\frac{1+\al}{2}}(0,1)}^{2},}{bound}
where $\ca, \ba$ are positive constant which depends only on $\al$.

\end{lem}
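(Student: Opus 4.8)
The plan is to prove the two estimates essentially independently, reusing what has already been obtained for Theorem \ref{C0}. The coercivity bound (\ref{koer}) is almost immediate: in the course of proving Theorem \ref{C0} we established (\ref{koer0}), namely $\Re(-\poch \da u, u) \geq \ca \norm{D^{\frac{1+\al}{2}}u}_{\ld}^{2}$, and Lemma \ref{rown} asserts that on $\dd$ the quantity $\norm{D^{\frac{1+\al}{2}}u}_{\ld}$ bounds $\norm{u}_{H^{\frac{1+\al}{2}}(0,1)}$ from below; combining these two facts and relabelling the constant gives (\ref{koer}). Thus the real content is the boundedness estimate (\ref{bound}).

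For (\ref{bound}) I would start by integrating by parts. Since $u_{x}\in {}_{0}H^{\al}(0,1)$, Corollary \ref{eqc} (with the exponents $\al$ and $1-\al$) yields $\da u = I^{1-\al}u_{x}\in {}_{0}H^{1}(0,1)$, so in particular $\da u\in H^{1}(0,1)$ and $\da u(0)=0$; together with $u(1)=0$ this makes both boundary terms in the integration by parts vanish, and since $\poch \da u$ is the classical derivative of $\da u\in H^{1}(0,1)$ one obtains $(-\poch \da u, u) = (\da u, u_{x})$. This reduces the task to estimating $\abs{(\da u,u_{x})}$ by $\norm{u}_{H^{\frac{1+\al}{2}}(0,1)}^{2}$, i.e.\ to distributing the available smoothness symmetrically between the two factors.

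To carry this out I would set $\beta:=\frac{1-\al}{2}$ and $w:=I^{\beta}u_{x}=D^{\frac{1+\al}{2}}u$. The elementary but crucial observation is that $\beta\in(0,\frac{1}{2})$ for every $\al\in(0,1)$, so Propositions \ref{eq}, \ref{ie} and \ref{de} are all applicable with exponent $\beta$. Since $u_{x}\in\ld$, Proposition \ref{eq} gives $w\in {}_{0}H^{\beta}(0,1)$ with $\p^{\beta}w=u_{x}$, while $\da u = I^{1-\al}u_{x}=I^{\beta}w$. Using the Fubini identity (\ref{fub}) (legitimate here because $\p^{\beta}w=u_{x}\in\ld$) I rewrite $(\da u,u_{x})=(I^{\beta}w,\p^{\beta}w)=(w,I^{\beta}_{-}u_{x})$, and then the Cauchy--Schwarz inequality together with the boundedness of $I^{\beta}_{-}\colon H^{-\beta}(0,1)\to\ld$ (Proposition \ref{ie}) and of $\p^{\beta}\colon\ld\to H^{-\beta}(0,1)$ (Proposition \ref{de}) gives $\abs{(\da u,u_{x})}\le c\norm{w}_{\ld}\,\norm{u_{x}}_{H^{-\beta}(0,1)}=c\norm{w}_{\ld}\,\norm{\p^{\beta}w}_{H^{-\beta}(0,1)}\le c\norm{w}_{\ld}^{2}$. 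Finally $\norm{w}_{\ld}=\norm{D^{\frac{1+\al}{2}}u}_{\ld}$, which by Lemma \ref{rown} is controlled by $c\norm{u}_{H^{\frac{1+\al}{2}}(0,1)}$, yielding (\ref{bound}).

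I expect the main obstacle to be this boundedness estimate and, within it, the bookkeeping just described: checking that the boundary terms in the integration by parts genuinely vanish (the relevant point being $\da u\in{}_{0}H^{1}(0,1)$, not merely $\da u\in H^{1}(0,1)$), and making sure that each fractional operator appearing in the argument lands in a space for which a boundedness or isomorphism statement from Section 2 is applicable -- which is exactly where the inequality $\frac{1-\al}{2}<\frac{1}{2}$ is used. By contrast, the coercivity estimate (\ref{koer}) is essentially a corollary of results already established.
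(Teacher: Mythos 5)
Your proof is correct and follows essentially the same route as the paper: (\ref{koer}) is obtained from (\ref{koer0}) combined with Lemma \ref{rown}, and (\ref{bound}) via integration by parts (boundary terms vanishing because $\da u\in{}_{0}H^{1}(0,1)$ and $u(1)=0$), transposition by the Fubini identity (\ref{fub}) so that a factor $D^{\frac{1+\al}{2}}u$ appears on each side, Cauchy--Schwarz, and the norm equivalence of Lemma \ref{rown}. The only (harmless) difference is the treatment of the transposed factor: the paper identifies $I^{\frac{1-\al}{2}}_{-}\overline{u}_{x}$ with $D^{\frac{1+\al}{2}}_{-}\overline{u}$ and applies the minus-side isomorphism of Proposition \ref{eq} together with $u(1)=0$, whereas you bound $\norm{I^{\frac{1-\al}{2}}_{-}u_{x}}_{L^{2}(0,1)}$ by $c\,\norm{D^{\frac{1+\al}{2}}u}_{L^{2}(0,1)}$ through the duality statements of Propositions \ref{ie} and \ref{de} with exponent $\frac{1-\al}{2}<\frac{1}{2}$.
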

\begin{proof}
We have already obtained in (\ref{koer0})
\[
\Re \left(-\poch \da u, u\right) \geq  \ca \norm{D^{\frac{1+\al}{2}}\ u}_{L^{2}(0,1)}^{2}.
\]
Hence, in order to prove (\ref{koer}) it is enough to apply the norm equivalence from Lemma~\ref{rown}. To show (\ref{bound}), we firstly notice that since $u \in \dd$, we know that $u_{x}\in {}_{0}H^{\al}(0,1)$ and from Corollary \ref{eqc} we infer that $\da u= I^{1-\al}u_{x} \in {}_{0}H^{1}(0,1)$.
Applying Proposition \ref{skladanie} in the first and third identity and $(\da u)(0)=0$ in the second one, we may write
\[
\poch \da u = \p^{\frac{1+\al}{2}}D^{\frac{1-\al}{2}} D^{\al}u =\p^{\frac{1+\al}{2}}\p^{\frac{1-\al}{2}} D^{\al}u=\p^{\frac{1+\al}{2}}D^{\frac{1+\al}{2}}u = \poch I^{\frac{1-\al}{2}}D^{\frac{1+\al}{2}}u.
\]
Integrating by parts, in view of $\overline{u}(1)=0$ we obtain
\[
\abs{(-\poch D^{\al}u,u)}=\abs{\izj \poch I^{\frac{1-\al}{2}}D^{\frac{1+\al}{2}}u \cdot \overline{u}dx} = \abs{\izj I^{\frac{1-\al}{2}}D^{\frac{1+\al}{2}}u \cdot \overline{u}_{x}dx}.
\]
Thus, by the identity (\ref{fub}) we get
\eqq{
\abs{(-\poch D^{\al}u,u)} = \abs{\izj D^{\frac{1+\al}{2}}u \cdot D^{\frac{1+\al}{2}}_{-}\overline{u}dx}
\leq \norm{D^{\frac{1+\al}{2}}u}_{L^{2}(0,1)}\norm{D^{\frac{1+\al}{2}}_{-}u}_{L^{2}(0,1)}.
}{dpdm}
Since $u(1)=0$, applying Proposition \ref{eq} we obtain that
\eqq{
\norm{D^{\frac{1+\al}{2}}_{-}u}_{L^{2}(0,1)} = \norm{\p^{\frac{1+\al}{2}}_{-}u}_{L^{2}(0,1)} \leq \ba \norm{u}_{^{0}H^{\frac{1+\al}{2}}(0,1)} =\ba \norm{u}_{H^{\frac{1+\al}{2}}(0,1)},
}{rownm}
where by $\ba$ we denote a positive constant dependent on $\al$.
Making use of this estimate and the norm equivalence from Lemma \ref{rown} in (\ref{dpdm}) we obtain the estimate~(\ref{bound}).
\end{proof}

\no Finally, we are ready to prove the main theorem.
\begin{theorem}\label{anal}
The operator $\poch \da: \dd \subseteq \ld  \rightarrow \ld$ generates an analytic semigroup.
\end{theorem}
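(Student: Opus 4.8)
The plan is to upgrade the contraction semigroup of Theorem~\ref{C0} to an analytic one by deriving a sectorial resolvent estimate for $A:=\poch\da$, using the numerical‑range bounds (\ref{koer})--(\ref{bound}) together with the surjectivity established in the lemma inside the proof of Theorem~\ref{C0}. First I would record that $A$ is closed, being the generator of a $C_{0}$-semigroup (Theorem~\ref{C0}), and that by (\ref{koer}), (\ref{bound}) every $u\in\dd$ with $u\neq0$ satisfies $\abs{(-\poch\da u,u)}\le\frac{\ba}{\ca}\Re(-\poch\da u,u)$, hence $\cos\bigl(\arg(-\poch\da u,u)\bigr)\ge\ca/\ba$. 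Thus the numerical range of $-\poch\da$ lies in the closed sector $\overline{S_{\psi}}:=\{z:\abs{\arg z}\le\psi\}\cup\{0\}$ with $\psi:=\arccos(\ca/\ba)\in[0,\tfrac{\pi}{2})$, so the numerical range $W$ of $A$ is contained in $-\overline{S_{\psi}}=\{z:\abs{\arg z}\ge\pi-\psi\}\cup\{0\}$; moreover, since the $H^{\frac{1+\al}{2}}$-norm dominates the $L^{2}$-norm, $\Re(-\poch\da u,u)\ge\ca\norm{u}_{\ld}^{2}$.

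Next I would exploit the elementary bound $\norm{(\lambda I-A)u}_{\ld}\ge\dist(\lambda,W)\norm{u}_{\ld}$, valid for all $\lambda\in\mathbb{C}$, $u\in\dd$ (it follows from $\norm{(\lambda I-A)u}_{\ld}\norm{u}_{\ld}\ge\abs{((\lambda I-A)u,u)}\ge\dist(\lambda,W)\norm{u}_{\ld}^{2}$). Since $W\subseteq-\overline{S_{\psi}}$, this makes $\lambda I-A$ injective with closed range whenever $\abs{\arg\lambda}<\pi-\psi$. Combining with $R(\lambda I-\poch\da)=\ld$ for $\lambda\in\vartheta_{\al}$ (the lemma in the proof of Theorem~\ref{C0}), we get $\lambda\in\rho(A)$ for every $\lambda\neq0$ with $\abs{\arg\lambda}<\theta_{0}$, where $\theta_{0}:=\min\{\pi-\psi,\tfrac{\pi(\al+1)}{2}\}>\tfrac{\pi}{2}$ (strict, because $\psi<\pi/2$ and $\al>0$). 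Also $0\in\rho(A)$: the bound $\norm{Au}_{\ld}\ge\ca\norm{u}_{\ld}$ makes $A$ injective with closed range, while $R(-\poch\da)=\ld$ (the lemma with $\lambda=0$) makes it onto. Finally, for a fixed small $\ve>0$ and any $\lambda$ with $\abs{\arg\lambda}\le\theta_{0}-\ve$, the sector $-\overline{S_{\psi}}$ makes angle at least $\ve$ with the ray through $\lambda$, so $\dist(\lambda,W)\ge\dist(\lambda,-\overline{S_{\psi}})\ge\abs{\lambda}\sin\ve$, whence $\norm{(\lambda I-A)^{-1}}\le(\abs{\lambda}\sin\ve)^{-1}$.

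Putting this together: $\poch\da$ generates a uniformly bounded (indeed contraction) $C_{0}$-semigroup by Theorem~\ref{C0}, $0\in\rho(\poch\da)$, and $\rho(\poch\da)$ contains a sector $\{\abs{\arg\lambda}<\tfrac{\pi}{2}+\delta\}\cup\{0\}$ with $\delta=\theta_{0}-\tfrac{\pi}{2}-\ve>0$ on which $\norm{(\lambda I-\poch\da)^{-1}}\le M/\abs{\lambda}$. By the standard characterization of generators of analytic semigroups \cite[Ch.~2, Theorem~5.2]{Pazy}, the semigroup generated by $\poch\da$ extends to an analytic semigroup on a sector, which proves the theorem. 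I expect the only delicate point to be the passage from the pointwise lower bound $\norm{(\lambda I-A)u}\ge\dist(\lambda,-\overline{S_{\psi}})\norm{u}$ to the decay estimate $\norm{(\lambda I-A)^{-1}}\le M/\abs{\lambda}$ on a proper sub‑sector — a short but slightly fussy estimate of the distance of a point to a sector, and some care about which sector ($\vartheta_{\al}$, $\overline{S_{\psi}}$, or their reflections) is invoked where — since everything else is a direct assembly of Theorem~\ref{C0}, its internal lemma, and the coercivity/continuity estimates (\ref{koer})--(\ref{bound}).
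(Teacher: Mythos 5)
Your proposal is correct and follows essentially the same route as the paper: confine the numerical range of $-\poch\da$ to a closed sector of half-angle strictly less than $\frac{\pi}{2}$ using (\ref{koer})--(\ref{bound}), deduce the resolvent estimate $\norm{(\lambda I-\poch\da)^{-1}}\le M/\abs{\lambda}$ on a sector of angle greater than $\frac{\pi}{2}$, and conclude with \cite[Ch.~2, Theorem 5.2]{Pazy}. The only difference is minor: where you re-derive the resolvent bound by hand (Cauchy--Schwarz against the numerical range, plus surjectivity of $\lambda I-\poch\da$ for $\lambda\in\vartheta_{\al}$, including $\lambda=0$, from the internal lemma), the paper instead invokes Proposition \ref{numerical} (\cite[Ch.~1, Theorem 3.9]{Pazy}) together with $(-\infty,0]\subseteq\rho(-\poch\da)$, which yields the slightly larger sector $\abs{\arg\lambda}<\pi-\nu$ not capped by $\frac{\pi(\al+1)}{2}$ --- a difference without consequence, since both sectors exceed the half-plane needed for analyticity.
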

\begin{proof}

We will give the proof of analyticity following the proof of \cite[Ch. 7, Theorem 2.7]{Pazy}, where the elliptic operators are studied.\\
At first, we notice that since $L^{2}(0,1)$ is a Hilbert space, the numerical range of $-\poch \da$ equals
\[
 S(-\poch \da)= \left\{\left(u, -\poch\da u\right): u \in \dd, \hd \norm{u}_{L^{2}(0,1)}=1 \right\}.
\]
We note that by (\ref{koer}) zero does not belong to $S(-\poch \da)$.
Let us denote $z=\left(u, -\poch\da u\right)$. Then, in view of (\ref{koer}) and (\ref{bound}), we obtain that
\[
\abs{\tan (\arg z)} = \abs{\frac{\Im z}{\Re z}} \leq \frac{\ba}{\ca},
\]
which implies
\[
S(-\poch \da) \subseteq \left\{\lambda \in \mathbb{C}: \abs{\arg \lambda} \leq \arctan\left(\frac{\ba}{\ca}\right)\right\}
\]
and $\arctan(\frac{\ba}{\ca}) < \frac{\pi}{2}$. We may choose $\nu$ such that $\arctan(\frac{\ba}{\ca})< \nu <\frac{\pi}{2}$ and denote $\Sigma_{\nu}:=\{\lambda:\abs{\arg \lambda} > \nu \}$. Then, $\Sigma_{\nu} \subseteq \mathbb{C} \setminus \overline{S(-\poch \da)}$ and there exists a positive constant $c_{\nu}$
such that
\[
d(\lambda, S(-\poch \da)) \geq c_{\nu} \abs{\lambda} \m{ for all }\lambda \in \Sigma_{\nu}.
\]
By Theorem \ref{C0} we know that $(-\infty,0] \subseteq \rho(-\poch \da)$, which implies that
\[
 \Sigma_{\nu} \cap \rho(-\poch \da) \neq \emptyset.
\]
We may apply Proposition \ref{numerical} from Appendix to the operator $-\poch \da$ to obtain that spectrum of $-\poch \da$ is contained in $\mathbb{C}\setminus \Sigma_{\nu}$, which means that $\Sigma_{\nu}\subseteq \rho (-\poch \da)$ and
\[
\norm{\left(\lambda I - (-\poch \da)\right)^{-1}} \leq \frac{1}{d(\lambda,\overline{S(\poch \da)})} \leq \frac{1}{c_{\nu}\abs{\lambda}} \m{ for all }\lambda \in \Sigma_{\nu}.
\]
Thus, the set $\{\lambda \in \mathbb{C}: \abs{\arg \lambda} < \pi - \nu\} \subseteq \rho(\poch \da)$ and
\[
\norm{\left(\lambda I - \poch \da \right)^{-1}} \leq \frac{1}{c_{\nu}\abs{\lambda}} \m{ for every }\lambda \in \mathbb{C}:  \abs{\arg \lambda} < \pi - \nu.
\]
Making use of \cite[Ch. 2, Theorem 5.2.]{Pazy} we obtain that the semigroup generated by $\poch \da$ can be extended to the analytic semigroup on the sector $\abs{\arg \lambda} < \pi - \nu$. That way we proved the theorem.
\end{proof}
\section{The general case}

The results of Theorem \ref{anal} may be extended for an operator $\poch (p(x)\da)$ if only the function $p$ satisfies appropriate assumptions. We state this result in the next theorem.

\begin{theorem} \label{gen}
Let $p \in W^{1,\infty}(0,1)$, be strictly positive, i.e. there exists $\delta$ such that $p \geq \delta >0$. Then, the operator $\poch (p(x)\da): \dd \subseteq \ld  \rightarrow \ld$ generates an analytic semigroup.
\end{theorem}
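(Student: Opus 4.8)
Write $A:=\poch(p(x)\da)$ with domain $\dd$, and let $A_{0}:=\poch\da$ be the operator of Theorem \ref{anal}. The plan is to reproduce the argument of Theorem \ref{anal}: it suffices to produce constants $\ca,\ba>0$ and $\om\ge 0$ with
\[
\Re(-Au,u)_{\ld}\ \ge\ \ca\norm{u}_{H^{\frac{1+\al}{2}}(0,1)}^{2}-\om\norm{u}_{\ld}^{2},\qquad \abs{(-Au,u)_{\ld}}\ \le\ \ba\norm{u}_{H^{\frac{1+\al}{2}}(0,1)}^{2}\qquad(u\in\dd),
\]
and with $(\om,\infty)\subseteq\rho(A)$ (the latter will also yield that $A$ is closed). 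Granting these, $-(A-\om)$ has numerical range in the proper subsector $\{\,\abs{\arg z}\le\arctan(\ba/\ca)\,\}$ of the right half-plane — a real shift raises $\Re z$ and leaves $\Im z=\Im(-Au,u)$ unchanged — and, as $(-\infty,0)\subseteq\rho(-(A-\om))$, Proposition \ref{numerical} gives $\{\,\abs{\arg\lambda}<\pi-\nu\,\}\subseteq\rho(A-\om)$ with $\norm{(\lambda I-(A-\om))^{-1}}\le c\abs{\lambda}^{-1}$; by \cite[Ch.~2, Theorem 5.2]{Pazy} the semigroup generated by $A-\om$, hence that generated by $A$, extends to an analytic semigroup.

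All estimates start from the form identity obtained by integrating by parts — the boundary terms vanish because $u(1)=0$ and, by Corollary \ref{eqc}, $\da u=I^{1-\al}u_{x}\in{}_{0}H^{1}(0,1)$ so $\da u(0)=0$ — and then using (\ref{fub}) together with Proposition \ref{skladanie}:
\[
(-Au,u)_{\ld}=\izj p\,\da u\cdot\overline{u_{x}}\,dx=\izj D^{\frac{1+\al}{2}}u\cdot\overline{I^{\frac{1-\al}{2}}_{-}(p\,u_{x})}\,dx,\qquad I^{\frac{1-\al}{2}}_{-}(p\,u_{x})=-D^{\frac{1+\al}{2}}_{-}(p\,u)-I^{\frac{1-\al}{2}}_{-}(p'u),
\]
the last equality using $(pu)(1)=0$ and $p\,u_{x}=(pu)_{x}-p'u$. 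Since $\tfrac{1+\al}{2}\in(\tfrac12,1)$, multiplication by $p\in W^{1,\infty}(0,1)$ is bounded on $H^{\frac{1+\al}{2}}(0,1)$ and preserves the vanishing condition at $x=1$; combined with Proposition \ref{eq}, Lemma \ref{rown} and the $\ld$-boundedness of $I^{\frac{1-\al}{2}}_{-}$, this gives the boundedness estimate at once, since $\abs{(-Au,u)_{\ld}}\le c\norm{D^{\frac{1+\al}{2}}u}_{\ld}\big(\norm{p\,u}_{{}^{0}H^{\frac{1+\al}{2}}(0,1)}+\norm{p'u}_{\ld}\big)\le\ba\norm{u}_{H^{\frac{1+\al}{2}}(0,1)}^{2}$.

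The coercivity estimate is the crux and the only genuine difficulty, precisely because $p$ does not commute with the nonlocal operators. Reducing to real $u$ (the real part decouples over $\Re u$ and $\Im u$) and using $\da u(x)=\jgja\izx(x-s)^{-\al}u_{x}(s)\,ds$,
\[
\Re(-Au,u)_{\ld}=\jgja\iint_{0<s<x<1}p(x)\,(x-s)^{-\al}\,u_{x}(s)\,u_{x}(x)\,ds\,dx .
\]
I would detach the weight $p$ from the nonlocal factors by a short sequence of commutator steps, each of which costs a term whose kernel gains one power of $\abs{x-s}$ (this is where $p\in W^{1,\infty}(0,1)$ enters), hence is of strictly lower order in $u$ and is absorbed into $\ve\norm{u}_{H^{(1+\al)/2}(0,1)}^{2}+C_{\ve}\norm{u}_{\ld}^{2}$ by interpolation and Young's inequality. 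First, symmetrising the kernel (replacing $p(x)$ by $\tfrac12(p(x)+p(s))$, the error kernel being bounded in modulus by $\operatorname{Lip}(p)\abs{x-s}^{1-\al}$) turns the main term into $\tfrac{1}{2\Gamma(1-\al)}\,(p\,u_{x},\,Ju_{x})_{\ld}$ with $Jw:=\int_{0}^{1}\abs{\,\cdot\,-s}^{-\al}w(s)\,ds$. Next, writing $p=(p^{1/2})^{2}$ — legitimate since $p\ge\delta>0$ forces $p^{1/2}\in W^{1,\infty}(0,1)$ — and commuting $p^{1/2}$ past $J$ (the commutator kernel $(p^{1/2}(x)-p^{1/2}(s))\abs{x-s}^{-\al}$ being again one order smoother) leaves the main term $\big(J(p^{1/2}u_{x}),\,p^{1/2}u_{x}\big)_{\ld}$. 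Finally, the Riesz kernel $\abs{x-s}^{-\al}$ is positive definite for $\al\in(0,1)$, so, using Proposition \ref{ie}, Lemma \ref{rown} and the fact that multiplication by $p^{1/2}$ is an isomorphism of $H^{-\frac{1-\al}{2}}(0,1)$,
\[
\big(J(p^{1/2}u_{x}),\,p^{1/2}u_{x}\big)_{\ld}\ \ge\ c\norm{p^{1/2}u_{x}}_{H^{-\frac{1-\al}{2}}(0,1)}^{2}\ \ge\ c'\norm{u_{x}}_{H^{-\frac{1-\al}{2}}(0,1)}^{2}\ \ge\ c''\norm{I^{\frac{1-\al}{2}}u_{x}}_{\ld}^{2}=c''\norm{D^{\frac{1+\al}{2}}u}_{\ld}^{2}\ \ge\ c'''\norm{u}_{H^{\frac{1+\al}{2}}(0,1)}^{2}.
\]
Collecting the contributions and choosing $\ve$ small yields the Gårding inequality.

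For the resolvent point I would use the method of continuity. Put $p_{t}:=1+t(p-1)$ and $A_{t}:=\poch(p_{t}\da)$ on $\dd$ for $t\in[0,1]$; each $p_{t}\in W^{1,\infty}(0,1)$ satisfies $p_{t}\ge\min(1,\delta)>0$ and $\norm{p_{t}'}_{L^{\infty}}\le\norm{p'}_{L^{\infty}}$, so the constants $\ca,\ba,\om$ above — and the equivalence of $\norm{\cdot}_{\dd}$ with $\norm{\cdot}_{\ld}+\norm{A_{t}\cdot}_{\ld}$, a routine consequence of $A_{t}u=p_{t}'\da u+p_{t}A_{0}u$, the graph-norm description of $\dd$ for $A_{0}$, and the interpolation bound $\norm{\da u}_{\ld}\le\ve\norm{u}_{H^{1+\al}(0,1)}+C_{\ve}\norm{u}_{\ld}$ — may be chosen uniform in $t$. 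Fixing $\mu>\om$, the bound $\Re((\mu I-A_{t})u,u)_{\ld}\ge(\mu-\om)\norm{u}_{\ld}^{2}$ and the uniform graph-norm equivalence give the a priori estimate $\norm{u}_{\dd}\le C\norm{(\mu I-A_{t})u}_{\ld}$ with $C$ independent of $t$; since $\mu I-A_{0}:\dd\to\ld$ is bijective by Theorem \ref{anal}, the method of continuity makes $\mu I-A_{t}:\dd\to\ld$ bijective for every $t$, in particular for $t=1$. Hence $\mu\in\rho(A)$ for all $\mu>\om$, so $(\om,\infty)\subseteq\rho(A)$, the bounded inverse shows $A$ closed, and the reduction of the first paragraph completes the proof. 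The one real obstacle is the coercivity estimate, which the commutator argument above overcomes by reducing it to the positive-definiteness of the Riesz kernel — equivalently, to the constant-coefficient energy identity behind (\ref{koer}) — modulo terms of strictly lower order.
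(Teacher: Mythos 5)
Your overall plan (a G\aa rding inequality for the weighted form, the numerical--range argument, and the method of continuity for the range condition) is a genuinely different route from the paper's, and much of it is sound; but the step you yourself call the crux --- the coercivity estimate --- is not proved, and the justification you offer for the error terms would fail as stated. After symmetrisation and the $p^{1/2}$-commutation, the errors are bilinear forms in $u_{x}\otimes u_{x}$ whose kernels you control only in modulus, by $\mathrm{Lip}(p)\abs{x-s}^{1-\al}$. A pointwise bound on the kernel gives no more than
\[
\Big|\iint (p(x)-p(s))(x-s)^{-\al}u_{x}(s)u_{x}(x)\,ds\,dx\Big|\ \le\ c\,\norm{u_{x}}_{L^{2}(0,1)}^{2},
\]
and $\norm{u_{x}}_{L^{2}}^{2}\sim\norm{u}_{H^{1}(0,1)}^{2}$ is of \emph{higher} order than $\norm{u}^{2}_{H^{\frac{1+\al}{2}}(0,1)}$, so it cannot be absorbed into $\ve\norm{u}^{2}_{H^{\frac{1+\al}{2}}(0,1)}+C_{\ve}\norm{u}^{2}_{L^{2}(0,1)}$. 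To make ``the kernel gains one power of $\abs{x-s}$'' count you must exploit cancellation, not size: either integrate by parts in the inner variable (which produces a boundary contribution at $s=0$ involving $u(0)\neq 0$ that your sketch ignores and which must itself be interpolated away), or prove genuine commutator--smoothing estimates of the type $[b,I^{1-\al}]:H^{-\frac{1-\al}{2}}(0,1)\to H^{\frac{1-\al}{2}}(0,1)$ for merely Lipschitz $b$. For Lipschitz-only coefficients at these fractional exponents this is a nontrivial piece of analysis, not a routine Young/interpolation absorption; and since the same inequality (uniformly along your family $p_{t}$) also underlies the a priori estimate in your continuity argument, the gap propagates through the whole proof.

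For comparison, the paper avoids commutators entirely: it splits $\poch(p\da)=A_{1}+A_{2}$ with $A_{1}=p\,\poch\da$ and $A_{2}=p'\da$. For $A_{1}$ the dangerous term $\izj (p-\delta)\,\da v\cdot v_{x}\,dx$ is handled by Alikhanov's pointwise inequality $\da f\cdot f\ge\frac{1}{2}\da f^{2}$ (Lemma \ref{ali}) followed by one integration by parts whose boundary term has a favourable sign, giving (\ref{ajd})--(\ref{ajg}); the range of $\lambda I-A_{1}$ is obtained as in the constant-coefficient case; and $A_{2}$ is disposed of abstractly as a relatively bounded perturbation of fractional order via the interpolation identity ${}^{0}H^{1}(0,1)=[L^{2}(0,1),\dd]_{\frac{1}{1+\al}}$ and \cite[Proposition 2.4.1 (i)]{Lunardi}. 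Your boundedness estimate, the shift/numerical-range reduction and the method of continuity are fine in themselves (the latter is a legitimate alternative to the paper's direct solution of the resolvent equation), but they all rest on the missing coercivity estimate; to repair your route you must either supply the commutator estimates rigorously or, more economically, peel off $p'\da$ first and use Lemma \ref{ali} as the paper does.
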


\no We split the operator $\poch (p(x)\da)$ into the sum $\poch (p(x) \da) = A_{1}+A_{2}$, where
\[
A_{1} = p(x)\poch\da, \hd A_{2} = p'(x)\da,
\]
\[
 D(A_{1}) := \dd, \hd D(A_{2}) := {}^{0}H^{1}(0,1): = \{u \in H^{1}(0,1),\hd u(1)=0\}.
\]
At first, we will show that the operator $A_{1}$ generates an analytic semigroup. In order to do it, we have to slightly modify the proof of Theorem \ref{anal}. Namely, we will prove the estimates similar to (\ref{koer}) and (\ref{bound}) for operator $A_{1}$. To obtain the estimate from below we will use the pointwise estimate from \cite{Al}.
\begin{lem}\cite[Lemma 1]{Al}\label{ali}
For any $\al \in (0,1)$ and any absolutely continuous function $f$ the following estimate holds
\[
(\da f)(x)f(x) \geq \frac{1}{2} (\da f^{2})(x) \hd a.e.
\]
\end{lem}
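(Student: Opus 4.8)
The statement is the pointwise Alikhanov inequality, and the plan is to prove it by a direct computation that produces an explicit nonnegative remainder, followed by a density argument. First I would express both Caputo derivatives through the absolutely continuous representation $\da f(x) = \jgja\izx (x-s)^{-\al}f'(s)\,ds$ from Definition \ref{fracdef}. Forming the difference and using $(f^{2})'=2ff'$, one sees at once that
\[
f(x)\da f(x) - \tfrac{1}{2}\da f^{2}(x) = \jgja \izx (x-s)^{-\al}f'(s)\,[f(x)-f(s)]\,ds,
\]
so the entire claim reduces to showing that the integral on the right is nonnegative a.e.

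The key idea is the substitution $g(s):=f(x)-f(s)$ (with $x$ fixed), for which $g'(s)=-f'(s)$ and $g(x)=0$; the integrand then becomes $-\tfrac{1}{2}(x-s)^{-\al}(g^{2})'(s)$. I would integrate by parts against the singular kernel $(x-s)^{-\al}$, whose $s$-derivative is $\al(x-s)^{-\al-1}$. The boundary term at $s=x$ vanishes because $g(x)=0$ forces $(x-s)^{-\al}g^{2}(s)\to 0$, while the boundary term at $s=0$ contributes $x^{-\al}(f(x)-f(0))^{2}$. This yields the explicit identity
\[
f(x)\da f(x) - \tfrac{1}{2}\da f^{2}(x) = \frac{1}{2\Gamma(1-\al)}\left[x^{-\al}\abs{f(x)-f(0)}^{2} + \al \izx \frac{\abs{f(x)-f(s)}^{2}}{(x-s)^{\al+1}}\,ds\right],
\]
whose right-hand side is manifestly nonnegative since $\Gamma(1-\al)>0$ for $\al\in(0,1)$. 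This already establishes the inequality, in fact with a quantified excess.

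To make the computation rigorous I would first carry it out for $f\in C^{1}[0,L]$, where the Lipschitz bound $\abs{f(x)-f(s)}\leq C(x-s)$ guarantees both that the boundary contribution at $s=x$ genuinely vanishes and that the improper integral $\izx (x-s)^{-\al-1}\abs{f(x)-f(s)}^{2}\,ds$ converges (its integrand is $O((x-s)^{1-\al})$), so the integration by parts is fully justified and the displayed identity holds pointwise. Then I would extend to $f\in AC[0,L]$ by approximation: taking $\phi_{n}\in C^{\infty}$ with $\phi_{n}\to f'$ in $L^{1}(0,L)$ and setting $f_{n}(x)=f(0)+\izx \phi_{n}$, one has $f_{n}\to f$ uniformly and $f_{n}'\to f'$ in $L^{1}$. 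Since $I^{1-\al}$ is bounded on $L^{1}(0,L)$, this gives $\da f_{n}\to \da f$ and $\da f_{n}^{2}\to \da f^{2}$ in $L^{1}$, and uniform boundedness of $f_{n}$ gives $f_{n}\da f_{n}\to f\da f$ in $L^{1}$; passing to the limit along a subsequence converging a.e. transfers the inequality $f_{n}\da f_{n}\geq \tfrac{1}{2}\da f_{n}^{2}$ to $f$.

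The main obstacle is precisely this regularity step. The clean identity rests on integration by parts against the non-integrable factor $(x-s)^{-\al-1}$, and for merely absolutely continuous $f$ the remainder integral need not converge for every $x$ (an $L^{1}$ bound on $f'$ does not control the decay of $f(x)-f(s)$ as $s\to x$). For this reason the pointwise identity should be proved only at the $C^{1}$ level, with the inequality then transported to $AC$ functions through $L^{1}$ convergence — which is exactly why the statement is asserted a.e. rather than everywhere.
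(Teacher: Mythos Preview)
The paper does not prove this lemma; it is quoted verbatim from Alikhanov \cite[Lemma 1]{Al} and used as a black box. Your argument is correct and is in fact the standard Alikhanov computation: the integration-by-parts identity that produces the explicit nonnegative remainder
\[
\frac{1}{2\Gamma(1-\al)}\left[x^{-\al}\abs{f(x)-f(0)}^{2} + \al \izx \frac{\abs{f(x)-f(s)}^{2}}{(x-s)^{\al+1}}\,ds\right]
\]
is exactly the original proof, and your density step from $C^{1}$ to $AC$ (needed because the kernel $(x-s)^{-\al-1}$ is not integrable and the boundary term at $s=x$ is not automatically controlled for bare $AC$ functions) is the natural way to close the argument in the generality stated.
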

\begin{lem}\label{ajsz}
Let the constant $\ca$ come from estimate (\ref{koer}). Then, under the assumptions of Theorem \ref{gen} there exist positive constants $\bar{c},\bar{c}_{1}$ dependent only on $\delta,\al,\norm{p}_{W^{1,\infty}(0,1)}$ such that for any $u \in \dd$ the operator $A_{1}$ satisfies
\eqq{
\Re (-A_{1}u,u)\geq \frac{\delta \ca}{2}\norm{u}^{2}_{H^{\frac{1+\al}{2}}(0,1)} - \bar{c}\norm{u}_{L^{2}(0,1)}^{2}
}{ajd}
and
\eqq{\abs{(A_{1}u,u)} \leq \bar{c}_{1}\norm{u}^{2}_{H^{\frac{1+\al}{2}}(0,1)}.
}{ajg}
\end{lem}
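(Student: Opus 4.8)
The plan is to transfer the two estimates (\ref{koer}) and (\ref{bound}) for $\poch\da$ to the operator $A_{1} = p(x)\poch\da$, paying for the weight $p$ with lower-order terms. For the upper bound (\ref{ajg}) this is immediate: since $p \in W^{1,\infty}(0,1)\subseteq L^{\infty}(0,1)$, we have $|(A_{1}u,u)| = |(p\,\poch\da u, u)| = |(\poch\da u, p u)|$; then I would follow verbatim the integration-by-parts computation leading to (\ref{dpdm}), but with $\overline{pu}$ in place of $\overline{u}$ in the second slot. The boundary term still vanishes because $p(1)u(1)=0$, and we are left with $|\izj D^{\frac{1+\al}{2}}u \cdot D^{\frac{1+\al}{2}}_{-}(\overline{pu})\,dx|$. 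Bounding $\|D^{\frac{1+\al}{2}}_{-}(pu)\|_{L^{2}}$ by $\|pu\|_{{}^{0}H^{\frac{1+\al}{2}}}$ via Proposition \ref{eq}, and then $\|pu\|_{H^{\frac{1+\al}{2}}} \leq c\,\|p\|_{W^{1,\infty}}\|u\|_{H^{\frac{1+\al}{2}}}$ (multiplication by a Lipschitz function is bounded on $H^{s}$ for $s\le 1$), together with the norm equivalence from Lemma \ref{rown} on the first factor, yields (\ref{ajg}).

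For the lower bound (\ref{ajd}), the idea is to write $\Re(-A_{1}u,u) = -\Re\izj p(x)(\poch\da u)(x)\overline{u(x)}\,dx$ and split into real and imaginary parts of $u$, as in the proof of Theorem \ref{C0}. For a real function $v = \da u$ (recall $\poch\da u = \p^{1-\al}\da u$ by Proposition \ref{skladanie}), I would want a weighted version of the energy identity. The cleanest route is via Lemma \ref{ali}: since $\da(\Re u)$ is absolutely continuous (indeed $\da u \in {}_{0}H^{1}(0,1)$ by Corollary \ref{eqc}), applying $\p^{1-\al}$ is not quite what Lemma \ref{ali} addresses, so instead I would use the factorization $\poch\da u = \p^{\frac{1+\al}{2}}D^{\frac{1+\al}{2}}u$ established in the proof of (\ref{bound}) and then... actually the simpler path: integrate by parts once to get $-\izj p\,\poch\da u\cdot\overline u = \izj (\da u)(p\overline u)' - [p\,\da u\,\overline u]_{0}^{1}$; the boundary term at $1$ vanishes, at $0$ one uses $(\da u)(0)=0$. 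Then $(p\overline u)' = p'\overline u + p\overline u_{x}$, so modulo the $p'$-term (which is $O(\|u\|_{L^2}\|u\|_{H^1})$ and absorbable) the main contribution is $\Re\izj p\,(\da u)\,\overline{u_x}\,dx = \Re\izj p\,(\da u)\,\overline{D^{1-\al}\da u}\cdot(\cdots)$ — hmm, this is getting tangled. The robust approach is: use $\Re\izj p(x)\da u(x)\overline{u_x(x)}\,dx$ and invoke Lemma \ref{ali} in the form $\Re(\da u\cdot \overline{u}) \ge \tfrac12 \da|u|^2$ pointwise after first integrating $\p^{1-\al}$ off — concretely, write $\Re(-A_1 u,u)$ using the identity $\int_0^1 \p^{\al}w\cdot w = \int_0^1 I^{1-\al}w'\cdot w$-type manipulations and Proposition \ref{skladanie} to reduce to $\Re\int_0^1 p(x)(\da u_x\text{-terms})$; then Lemma \ref{ali} gives a pointwise lower bound by $\tfrac{\delta}{2}\da(|\,\cdot\,|^2)$ plus an error controlled by $\|p'\|_{L^\infty}$.

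Concretely, I expect the clean execution to be: (i) reduce to real-valued $u$ by the Re/Im splitting; (ii) write $\Re(-A_1 u, u) = \int_0^1 p(x)\,\da u(x)\, u_x(x)\,dx$ after integrating by parts and discarding boundary terms (using $u(1)=0$, $(\da u)(0)=0$), paying a term $\int_0^1 p'\,(\text{something})$ which is bounded by $\bar c\|u\|_{L^2}^2 + \tfrac{\delta\ca}{4}\|u\|_{H^{(1+\al)/2}}^2$ by Young's inequality and interpolation; (iii) on the main term, use $p \ge \delta$ together with the already-proven estimate (\ref{koer}) applied to the unweighted operator — i.e. compare $\int_0^1 p\,\da u\, u_x$ with $\delta\int_0^1 \da u\, u_x = \delta\,\Re(-\poch\da u, u) \ge \delta\ca\|u\|_{H^{(1+\al)/2}}^2$, the difference $\int_0^1 (p-\delta)\da u\, u_x$ being nonnegative? — no, $p-\delta \ge 0$ but the integrand need not have a sign. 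So here is where Lemma \ref{ali} earns its place: it gives $\da u \cdot u_x \ge \tfrac12\da(u^2)$... but $u_x$, not $u$. The honest reduction uses Proposition \ref{skladanie} to rewrite everything in terms of $\da u$ and $\p^{(1-\al)/2}$ so that Lemma \ref{ali} applies to $w = \da u$ with its own derivative. The main obstacle, then, is precisely this: handling the weight $p$ inside the quadratic form when the Caputo operator is nonlocal, so that $p$ cannot simply be pulled out. I anticipate the resolution is to combine Lemma \ref{ali} (which handles $p\cdot(\da w)w \ge \tfrac{p}{2}\da(w^2) \ge \tfrac{\delta}{2}\da(w^2)$ pointwise) with an integration by parts of $\int_0^1 \da(w^2)$ and the boundary/positivity terms from the energy identity, the commutator cost $[p,\da]$ being lower-order because $p$ is Lipschitz. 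Once (\ref{ajd}) and (\ref{ajg}) are in hand, the sector argument from the proof of Theorem \ref{anal} applies to $A_1$ after a shift by $\bar c$, and then $A_2$ is handled as a relatively bounded (indeed relatively compact, lower-order) perturbation.
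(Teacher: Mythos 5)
Your argument for the upper bound (\ref{ajg}) is correct, and it is a genuinely different route from the paper's: after the same integration by parts, the paper splits $(p\overline{u})_x$ into $p\overline{u}_x+p'\overline{u}$ and controls $I^{\frac{1-\al}{2}}_{-}(p\overline{u}_x)$ by an explicit kernel computation, writing $p(s)=(p(s)-p(x))+p(x)$ and integrating the commutator part by parts, so that only $\norm{D^{\frac{1+\al}{2}}_{-}u}_{L^{2}(0,1)}$ and $\norm{u}_{L^{2}(0,1)}$ appear; your shortcut instead keeps $p\overline{u}$ together and uses Proposition \ref{eq} for $pu\in{}^{0}H^{\frac{1+\al}{2}}(0,1)$ plus the boundedness of multiplication by a $W^{1,\infty}$ function on $H^{s}(0,1)$, $s\leq 1$. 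That is legitimate, at the price of invoking a multiplication theorem the paper does not need.

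The lower bound (\ref{ajd}) is where the proposal has genuine gaps. The skeleton you eventually converge on (reduce to real $v$, split off $\delta\izj \da v\cdot v_x\,dx$ and use (\ref{koer}) for coercivity, treat $\izj(p-\delta)\da v\cdot v_x\,dx$ via Lemma \ref{ali} applied to $w=\da v$ with $v_x=D^{1-\al}\da v$) is exactly the paper's, but two steps as written fail. First, the chain $p\,(\da w)w\geq \frac{p}{2}\da(w^{2})\geq\frac{\delta}{2}\da(w^{2})$ is invalid: $D^{1-\al}(\da v)^{2}$ has no pointwise sign, so you cannot replace the weight by its lower bound under the fractional derivative. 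The paper keeps the nonnegative weight $p-\delta$ in place, writes $D^{1-\al}(\da v)^{2}=\poch I^{\al}(\da v)^{2}$ (using $(\da v)(0)=0$), integrates by parts, and only then uses signs: the boundary term $(p(1)-\delta)I^{\al}(\da v)^{2}(1)$ is nonnegative because the fractional \emph{integral} of a nonnegative function is nonnegative, and the remainder $-\frac12\izj p'I^{\al}(\da v)^{2}dx$ is of size $\norm{p'}_{L^{\infty}(0,1)}\norm{\da v}^{2}_{L^{2}(0,1)}$. Second, your absorption of the $p'$-terms is unjustified: bounding them by $O(\norm{u}_{L^{2}(0,1)}\norm{u}_{H^{1}(0,1)})$ and declaring this absorbable ``by Young and interpolation'' is false, since $H^{1}$ is stronger than $H^{\frac{1+\al}{2}}$ (a two-frequency test function with bounded $L^{2}$ and $H^{\frac{1+\al}{2}}$ norms has $\norm{u}_{L^{2}}\norm{u}_{H^{1}}$ of order $k^{\frac{1-\al}{2}}$, unbounded). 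What is actually needed, and what the paper proves as inequality (\ref{dah}), is the finer estimate $\norm{\da v}_{L^{2}(0,1)}\leq \ve\norm{v}_{H^{\frac{1+\al}{2}}(0,1)}+c(\ve)\norm{v}_{L^{2}(0,1)}$, which holds because $\da$ is of order $\al<\frac{1+\al}{2}$; its proof (via Proposition \ref{ie} for $\al>\frac12$, the $L^{2}$-boundedness of $I^{\beta}$ for $\al\leq\frac12$, and a compact embedding) is a real ingredient missing from your sketch. With these two repairs your outline coincides with the paper's proof.
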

\begin{proof}
In order to prove (\ref{ajd}) we will firstly obtain appropriate estimate for any real-valued function $v \in \dd$. Subsequently, we will make use of this estimate for $v= \Re u$ and $v = \Im u$. For a real-valued $v \in \dd$ we may estimate as follows
\[
-\izj p(x)\poch \da v \cdot v dx = -\delta\izj \poch \da v \cdot v dx - \izj (p(x)-\delta)\poch \da v \cdot v dx.
\]
The first term is estimated from below by $\delta \ca \norm{v}^{2}_{H^{\frac{1+\al}{2}}(0,1)}$ by estimate (\ref{koer}). Since $v \in \dd$ we have $\da v \in {}_{0}H^{1}(0,1)$ and we may integrate by parts the second term
\[
- \izj (p(x)-\delta)\poch \da v \cdot v dx = \izj p'(x) \da v \cdot v dx+  \izj (p(x)-\delta) \da v \cdot v_{x} dx =:I_{1}+I_{2}.
\]
Before we will estimate $I_{1}$ and $I_{2}$ we will show that for every $\al \in (0,1)$, $v \in \dd$ and every $\ve > 0$ there exists $c=c(\ve)$ such that
\eqq{
\norm{\da v}_{L^{2}(0,1)} \leq \ve \norm{v}_{H^{\frac{1+\al}{2}}(0,1)} + c(\ve)\norm{v}_{L^{2}(0,1)}.
}{dah}
Indeed, for $\al \in (\frac{1}{2},1)$ there exists $c$ dependent on $\al$ such that
\[
\norm{\da v}_{L^{2}(0,1)} = \norm{I^{1-\al} v_{x}}_{L^{2}(0,1)} \leq c \norm{v_{x}}_{H^{\al-1}(0,1)}
\leq c \norm{v}_{H^{\al}(0,1)},
\]
where we used Proposition \ref{ie} in the first inequality and \cite[Remark 12.8]{Lions} in the second one. Since the embedding of $H^{\frac{1+\al}{2}}(0,1)$ into $H^{\al}(0,1)$ is compact we obtain~(\ref{dah}) by a standard argument.
In the case $\al \in (0,\frac{1}{2}]$ we may choose $\beta \in (\frac{1-\al}{2},\frac{1}{2})$ and we get
\[
\norm{\da v}_{L^{2}(0,1)} = \norm{I^{1-\al} v_{x}}_{L^{2}(0,1)} \leq c \norm{I^{\beta} v_{x}}_{L^{2}(0,1)},
\]
where in this estimate we used \cite[Theorem 2.6]{Samko} and the constant $c$ depends on $\al$ and $\beta$. Since $\beta < \frac{1}{2}$ we may repeat the reasoning carried for the case $\al \in (\frac{1}{2},1)$ to obtain
\[
\norm{I^{\beta} v_{x}}_{L^{2}(0,1)}\leq c \norm{v_{x}}_{H^{-\beta}(0,1)}
\leq c \norm{v}_{H^{1-\beta}(0,1)}.
\]
Making use of compact embedding of $H^{\frac{1+\al}{2}}(0,1)$ into $H^{1-\beta}(0,1)$ we obtain (\ref{dah}) in the case $\al \in (0,\frac{1}{2}]$.
With a use of (\ref{dah}) we are able to estimate $I_{1}$ as follows
\[
\abs{I_{1}} \leq \norm{p'}_{L^{\infty}(0,1)}\norm{\da v}_{L^{2}(0,1)}\norm{v}_{L^{2}(0,1)} \leq \frac{\delta\ca}{4}\norm{v}_{H^{\frac{1+\al}{2}}(0,1)}^{2} + \bar{c}\norm{v}_{L^{2}(0,1)}^{2},
\]
where $\bar{c}$ is a positive constant dependent only on $\delta,\al,\norm{p}_{W^{1,\infty}(0,1)}$.
To estimate $I_{2}$ we note that $v_{x} = \p^{1-\al}\da v = D^{1-\al}\da v$. Recalling that $p-\delta \geq 0$ we may apply Lemma \ref{ali} to get
\[
I_{2} \geq \frac{1}{2}\izj (p(x)-\delta)D^{1-\al}(\da v)^{2}(x)dx = \frac{1}{2}\izj (p(x)-\delta)\poch I^{\al}(\da v)^{2}(x)dx,
\]
where in the last identity we used the fact that $\da v \in {}_{0}H^{1}(0,1)$. Integrating by parts we obtain
\[
I_{2} \geq \frac{1}{2}(p(1)-\delta) I^{\al}(\da v)^{2}(1) -\frac{1}{2} \izj p'(x)I^{\al}(\da v)^{2}(x)dx.
\]
We note that the first expression is nonnegative. We estimate the second one as follows
\[
\abs{\izj p'(x)I^{\al}(\da v)^{2}(x)dx} \leq \norm{p'}_{L^{\infty}(0,1)}\norm{I^{\al}(\da v)^{2}}_{L^{1}(0,1)} \leq c \norm{p'}_{L^{\infty}(0,1)}\norm{\da v}_{L^{2}(0,1)}^{2},
\]
where we applied \cite[Theorem 2.6]{Samko} and  the constant $c$ depends on $\al$. Making use of (\ref{dah}) with an appropriate $\ve$ we obtain that
\[
\abs{\frac{1}{2}\izj p'(x)I^{\al}(\da v)^{2}(x)dx}  \leq \frac{\delta\ca}{4}\norm{v}_{H^{\frac{1+\al}{2}}(0,1)}^{2} + \bar{c}\norm{v}_{L^{2}(0,1)}^{2},
\]
where $\bar{c}$ is a positive constant dependent only on $\delta,\al,\norm{p}_{W^{1,\infty}(0,1)}$. Combining estimates for $I_{1}$ and $I_{2}$ we finally obtain that
\[
-\izj p(x)\poch \da v \cdot v dx \geq \frac{\delta \ca}{2}\norm{v}^{2}_{H^{\frac{1+\al}{2}}(0,1)} - \bar{c}\norm{v}_{L^{2}(0,1)}^{2}.
\]
Applying this result for $v = \Re u$ and $v = \Im u$ we obtain
\[
\Re \left(-p(x)\poch \da u, u\right) = -\Re \izj p(x)(\poch \da u)(x) \cdot \overline{u(x)}dx
\]
\[
=- \izj p(x)\poch\da \Re u(x)\cdot \Re u(x)dx - \izj p(x)\poch \da \Im u(x)\cdot \Im u(x)dx
\]
\[
\geq \frac{\delta \ca}{2}\norm{u}^{2}_{H^{\frac{1+\al}{2}}(0,1)} - \bar{c}\norm{u}_{L^{2}(0,1)}^{2}.
\]
This way we proved (\ref{ajd}). In order to show (\ref{ajg}) we perform integration by parts

\[
\abs{(p(x)\poch\da u,u)} \leq \abs{\izj p(x)(\da u)(x)\cdot \overline{u}_{x}(x)dx} + \abs{\izj p'(x)(\da u)(x)\cdot \overline{u}(x)dx}
\]
\[
= \abs{\izj p(x)(I^{\frac{1-\al}{2}} D^{\frac{1+\al}{2}} u)(x)\cdot \overline{u}_{x}(x)dx} + \abs{\izj p'(x)(I^{\frac{1-\al}{2}} D^{\frac{1+\al}{2}} u)(x)\cdot \overline{u}(x)dx}
\]
Applying the Fubini Theorem we obtain further
\[
=\abs{\izj  (D^{\frac{1+\al}{2}}u)(x)\cdot (I^{\frac{1-\al}{2}}_{-} p \overline{u}_{x})(x)dx} + \abs{\izj  (D^{\frac{1+\al}{2}} u)(x)\cdot (I^{\frac{1-\al}{2}}_{-}p'\overline{u})(x)dx}
\]
\[
\leq \norm{D^{\frac{1+\al}{2}}u}^{2}_{L^{2}(0,1)} + \frac{1}{2}\norm{I^{\frac{1-\al}{2}}_{-}( p \overline{u}_{x})}^{2}_{L^{2}(0,1)}+\frac{1}{2}\norm{I^{\frac{1-\al}{2}}_{-}(p'\overline{u})}^{2}_{L^{2}(0,1)}.
\]
We note that
\[
\Gamma(\frac{1-\al}{2})(I^{\frac{1-\al}{2}}_{-} p \overline{u}_{x})(x) = \int_{x}^{1}(s-x)^{-\frac{\al+1}{2}}p(s)\overline{u}_{x}(s)ds
\]
\[
= \int_{x}^{1}(s-x)^{-\frac{\al+1}{2}}(p(s)-p(x))\overline{u}_{x}(s)ds + p(x)\int_{x}^{1}(s-x)^{-\frac{\al+1}{2}}\overline{u}_{x}(s)ds.
\]
Making use of regularity of $u$ and $p$ we may integrate by parts the first integral to obtain that
\[
\abs{\int_{x}^{1}(s-x)^{-\frac{\al+1}{2}}(p(s)-p(x))\overline{u}_{x}(s)ds} \leq c \norm{p'}_{L^{\infty}(0,1)}\int_{x}^{1}(s-x)^{-\frac{\al+1}{2}}\abs{u(s)}ds,
\]
where $c$ depends on $\al$. Hence,
\[
\norm{I^{\frac{1-\al}{2}}_{-}( p \overline{u}_{x})}^{2}_{L^{2}(0,1)} \leq c(\al)\left( \norm{p'}_{L^{\infty}(0,1)}^{2}\norm{u}_{L^{2}(0,1)}^{2} + \norm{p}_{L^{\infty}(0,1)}^{2}\norm{D^{\frac{\al+1}{2}}_{-}u}_{L^{2}(0,1)}^{2}\right).
\]
Finally, we note that due to \cite[Theorem 2.6]{Samko} we have
\[
\norm{I^{\frac{1-\al}{2}}_{-}(p'\overline{u})}^{2}_{L^{2}(0,1)} \leq \norm{p'}_{L^{\infty}(0,1)}^{2}\norm{I^{\frac{1-\al}{2}}_{-}\abs{\overline{u}}}^{2}_{L^{2}(0,1)}
\leq c(\al) \norm{p'}_{L^{\infty}(0,1)}^{2} \norm{u}_{L^{2}(0,1)}^{2}.
\]
Combining this results, applying Lemma \ref{rown} and the identity (\ref{rownm}) we arrive at (\ref{ajg}).
\end{proof}
\no Having established Lemma \ref{ajsz}, we may prove the following result.
\begin{lem}\label{ajan}
Under the assumptions of Theorem \ref{gen} the operator $A_{1}:\dd\rightarrow L^{2}(0,1)$ is a generator of analytic semigroup.
\end{lem}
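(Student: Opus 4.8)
The plan is to run the argument of Theorem \ref{anal} almost verbatim, replacing the coercivity and boundedness bounds (\ref{koer})--(\ref{bound}) by their analogues (\ref{ajd})--(\ref{ajg}) from Lemma \ref{ajsz}. Since the right-hand side of (\ref{ajd}) carries an extra lower-order term $-\bar c\norm{u}_{L^{2}(0,1)}^{2}$, the first move is to pass to the shifted operator $B:=A_{1}-\bar c I$ on the same domain $\dd$ (with $\bar c$ the constant from (\ref{ajd})); because generation of an analytic semigroup is stable under a bounded shift, it suffices to treat $B$, and at the end $A_{1}=B+\bar c I$ inherits the conclusion. From (\ref{ajd}) together with the continuous embedding $H^{\frac{1+\al}{2}}(0,1)\hookrightarrow L^{2}(0,1)$ one gets $\Re(-Bu,u)=\Re(-A_{1}u,u)+\bar c\norm{u}_{L^{2}(0,1)}^{2}\geq\frac{\delta\ca}{2}\norm{u}_{H^{\frac{1+\al}{2}}(0,1)}^{2}$, so $0$ is not in the numerical range of $-B$; from (\ref{ajg}), $\abs{\Im(-Bu,u)}=\abs{\Im(A_{1}u,u)}\leq\bar c_{1}\norm{u}_{H^{\frac{1+\al}{2}}(0,1)}^{2}$.

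Dividing these two bounds exactly as in the proof of Theorem \ref{anal} places the numerical range $S(-B)$ inside a sector about the positive real axis of half-angle $\arctan(2\bar c_{1}/(\delta\ca))<\frac{\pi}{2}$. I would then fix $\nu$ with $\arctan(2\bar c_{1}/(\delta\ca))<\nu<\frac{\pi}{2}$, set $\Sigma_{\nu}:=\{\lambda:\abs{\arg\lambda}>\nu\}$, and record that $d(\lambda,S(-B))\geq c_{\nu}\abs{\lambda}$ on $\Sigma_{\nu}$.

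The step I expect to be the main obstacle is verifying that $\Sigma_{\nu}\cap\rho(-B)\neq\emptyset$, i.e. essentially that $(\bar c,\infty)\subseteq\rho(A_{1})$, because for non-constant $p$ the resolvent equation $\lambda u-A_{1}u=g$ can no longer be solved explicitly in Mittag-Leffler functions as in the lemma inside Theorem \ref{C0}. Injectivity of $\lambda I-A_{1}$ for real $\lambda>\bar c$ is immediate: if $A_{1}u=\lambda u$ then taking the real part of $(A_{1}u,u)=\lambda\norm{u}_{L^{2}(0,1)}^{2}$ and using (\ref{ajd}) forces $u=0$. For surjectivity I would exploit that, by Theorem \ref{C0}, the lemma in its proof (which gives the range condition at $\lambda=0$, since $0\in\vartheta_{\al}$) and the coercivity (\ref{koer}), the operator $-\poch\da\colon\dd\to L^{2}(0,1)$ is a bijection whose inverse $T$ is bounded and, since $\dd$ embeds compactly in $L^{2}(0,1)$, compact; writing $v:=-\poch\da u$ turns $\lambda u-A_{1}u=g$ into $(I+\lambda\tfrac{1}{p}T)v=\tfrac{1}{p}g$ in $L^{2}(0,1)$, a compact perturbation of the identity (as $\tfrac1p$ is bounded and $p\geq\delta>0$), so the Fredholm alternative reduces its solvability to the injectivity just established. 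Hence $\lambda I-A_{1}$ is a closed bijection, thus boundedly invertible, giving $(\bar c,\infty)\subseteq\rho(A_{1})$ and therefore $(-\infty,0)\subseteq\rho(-B)$, which meets $\Sigma_{\nu}$.

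With a resolvent point of $-B$ inside $\Sigma_{\nu}$ secured, I would finish exactly as in Theorem \ref{anal}: Proposition \ref{numerical} applied to $-B$ upgrades this to $\Sigma_{\nu}\subseteq\rho(-B)$ with $\norm{(\lambda I+B)^{-1}}\leq(c_{\nu}\abs{\lambda})^{-1}$ on $\Sigma_{\nu}$, hence $\{\abs{\arg\lambda}<\pi-\nu\}\subseteq\rho(B)$ with $\norm{(\lambda I-B)^{-1}}\leq(c_{\nu}\abs{\lambda})^{-1}$ there, and \cite[Ch.~2, Theorem 5.2]{Pazy} yields that $B$, and therefore $A_{1}=B+\bar c I$, generates an analytic semigroup.
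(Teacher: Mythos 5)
Your proposal is correct, and its skeleton coincides with the paper's: the paper likewise passes to the shifted operator $A_{1}^{\xi}:=A_{1}-\xi I$ with $\xi\geq\bar c$, notes that (\ref{ajd})--(\ref{ajg}) give clean coercivity and boundedness bounds for the shifted operator, repeats the numerical-range argument of Theorem \ref{anal}, and recovers $A_{1}$ by a bounded perturbation. The genuine difference is exactly the step you flagged as the obstacle: exhibiting a resolvent point. The paper disposes of it in one sentence, asserting that $R(\lambda I-A_{1})=\ld$ for every $\lambda\geq 0$ can be shown ``similarly as in the case of operator $\poch\da$'', i.e.\ by solving the resolvent equation directly; since for non-constant $p$ the explicit Mittag--Leffler formula from the lemma in Theorem \ref{C0} is unavailable, carrying this out would need a Volterra-type successive approximation which the paper does not write down. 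Your substitute is a soft functional-analytic argument: injectivity of $\lambda I-A_{1}$ for real $\lambda>\bar c$ from (\ref{ajd}); invertibility of $-\poch\da$ (the lemma in Theorem \ref{C0} at $\lambda=0$ together with (\ref{koer})), with $T:=(-\poch\da)^{-1}$ compact on $\ld$ because (\ref{koer}) gives $\norm{Tg}_{H^{\frac{1+\al}{2}}(0,1)}\leq \ca^{-1}\norm{g}_{L^{2}(0,1)}$ and the embedding $H^{\frac{1+\al}{2}}(0,1)\hookrightarrow\ld$ is compact; and the Fredholm alternative for $I+\lambda\frac{1}{p}T$, whose injectivity reduces to that of $\lambda I-A_{1}$. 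This is sound and arguably more robust than the paper's sketched route, at the price of relying on compactness. Two details worth making explicit in a final write-up: the bounded invertibility of $\lambda I-A_{1}$ follows from the factorization $(\lambda I-A_{1})^{-1}=T\bigl(I+\lambda\frac{1}{p}T\bigr)^{-1}\frac{1}{p}$ (or directly from (\ref{ajd})), so no closed-graph appeal is needed; and the closedness and dense definition of $A_{1}$ required by Proposition \ref{numerical} follow from this bounded inverse and from $C_{c}^{\infty}(0,1)\subset\dd$.
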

\begin{proof}
We may show similarly as in the case of operator $\poch\da$ that for every $\lambda \geq 0$ we have $R(\lambda I - A_{1}) = L^{2}(0,1)$. Let us define $A_{1}^{\xi}:=A_{1}-\xi I$ for $\xi \geq \bar{c}$, where $\bar{c}$ comes from (\ref{ajd}). Then, we have $R(\lambda I - A^{\xi}_{1}) = L^{2}(0,1)$. Furthermore, from estimates (\ref{ajd}) and (\ref{ajg}) we deduce that there exist constants $\bar{c}_{2}, \bar{c}_{3}$ dependent only on $\al,\delta,\norm{p}_{W^{1,\infty}(0,1)}$ such that
\[
\Re (-A_{1}^{\xi}u,u) \geq \bar{c}_{2} \norm{u}^{2}_{H^{\frac{1+\al}{2}}(0,1)}, \hd \abs{(A_{1}^{\xi}u,u)} \leq \bar{c}_{3} \norm{u}^{2}_{H^{\frac{1+\al}{2}}(0,1)} \m{ for } u \in \dd.
\]
Hence, we may repeat the proof of Theorem \ref{anal} for an operator $A_{1}^{\xi}$ to obtain that it is a generator of an analytic semigroup. Thus, $A_{1}$ is also a generator of an analytic semigroup.
\end{proof}
\no At last, we are ready to prove Theorem \ref{gen}.
\begin{proof}[Proof of Theorem \ref{gen}]
In the case $p'\equiv 0$ a.e. on $(0,1)$ we obtain that $\poch p(x) \da = A_{1}$ and the claim follows from Lemma \ref{ajan}. In the case $p' \not \equiv 0$ we note that
\[
\norm{A_{2}u}_{L^{2}(0,1)} \leq \norm{p'}_{L^{\infty}(0,1)}\norm{\da u}_{L^{2}(0,1)} = \norm{p'}_{L^{\infty}(0,1)}\norm{I^{1-\al} u_{x}}_{L^{2}(0,1)}
\]
Since by \cite[Theorem 2.6]{Samko} the operator of fractional integration is bounded on $L^{2}(0,1)$ we may estimate further
\[
\norm{A_{2}u}_{L^{2}(0,1)} \leq c_{\al}\norm{p'}_{L^{\infty}(0,1)}\norm{u}_{{}^{0}H^{1}(0,1)}.
\]
Hence, we obtain that $A_{2} \in B({}^{0}H^{1}(0,1);L^{2}(0,1))$.
We note that ${}^{0}H^{1}(0,1) = [L^{2}(0,1),\dd]_{\frac{1}{1+\al}}$, hence by interpolation theorem there exists $c>0$ such that for every $u \in \dd$ there holds
\[
\norm{u}_{{}^{0}H^{1}(0,1)} \leq c \norm{u}_{L^{2}(0,1)}^{\frac{\al}{\al+1}}\norm{u}_{\dd}^{\frac{1}{\al+1}}.
\]
Thus, we may apply \cite[Proposition 2.4.1 (i)]{Lunardi} to obtain that the sum $A_{1}+A_{2}:\dd \rightarrow L^{2}(0,1)$ generates an analytic semigroup.
\end{proof}

\section{Applications}
In this section we will present a simple application of obtained results. We will investigate the solvability of the following parabolic-type problem
\begin{equation}\label{ab}
 \left\{ \begin{array}{ll}
u_{t} - \frac{\p}{\p x} \da u = 0 & \textrm{ in } (0,1) \times (0,T),\\
u_{x}(0,t) = 0, \ \ u(1,t) = 0 & \textrm{ for  } t \in (0,T), \\
u(x,0) = u_{0}(x) & \textrm{ in } (0,1). \\
\end{array} \right. \end{equation}
We will formulate the result in the theorem.
\begin{theorem}
If we assume that $\uz \in L^{2}(0,1)$, then there exists exactly one solution to (\ref{ab}) which belongs to $C([0,T];L^{2}(0,1)) \cap C((0,T];\dd) \cap C^{1}((0,T];L^{2}(0,1))$. Furthermore, there exists a positive constant $c$, such that the following estimate holds for every $t\in (0,T]$
\[
\norm{u(\cdot,t)}_{L^{2}(0,1)} + t\norm{u_{t}(\cdot,t)}_{L^{2}(0,1)} +t\norm{\poch\da u(\cdot,t)}_{L^{2}(0,1)} \leq c \norm{\uz}_{L^{2}(0,1)}.
\]
Moreover, $u \in C^{\infty}((0,T];L^{2}(0,1))$ and for every $t \in (0,T]$, for very $k\in \mathbb{N}$ we have $u(\cdot,t) \in D((\poch \da )^{k})\subseteq H^{(\al+1)k}_{loc}(0,1)$. The last inclusion implies that for every $t\in (0,T]$ $u(\cdot,t)~\in~C^{\infty}(0,1).$
\end{theorem}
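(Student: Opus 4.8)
The plan is to take $A:=\poch\da$ with domain $D(A)=\dd$, which by Theorem \ref{anal} generates an analytic semigroup $\{T(t)\}_{t\ge 0}$ on $\ld$ (and, by Theorem \ref{C0}, a semigroup of contractions), to set $u(\cdot,t):=T(t)\uz$, and to read off all the assertions from standard analytic-semigroup theory, the only genuinely new ingredient being the local Sobolev description of the iterated domains $D((\poch\da)^{k})$. From the general theory (see \cite{Pazy}, in particular Ch.~2, §5--6) one obtains at once: $t\mapsto u(\cdot,t)$ is continuous from $[0,T]$ into $\ld$ with $u(\cdot,0)=\uz$; for every $t>0$ one has $u(\cdot,t)\in D(A)=\dd$ and $t\mapsto Au(\cdot,t)$ is continuous on $(0,T]$ into $\ld$, which, since the graph norm of $\dd$ coincides with $\|\cdot\|_{H^{1+\al}(0,1)}$ for $\al\neq\tfrac12$ (respectively with the weighted norm introduced before Theorem \ref{C0} when $\al=\tfrac12$), gives $u\in C((0,T];\dd)$; and $t\mapsto u(\cdot,t)$ is $C^{1}$ on $(0,T]$ into $\ld$ with $u_{t}(\cdot,t)=Au(\cdot,t)=\poch\da u(\cdot,t)$. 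Hence $u$ solves (\ref{ab}) in the required class. For uniqueness, if $v$ is another solution in this class, then $w:=u-v$ satisfies $w_{t}=Aw$ on $(0,T]$ and $w(\cdot,0)=0$, and for fixed $t$ the map $s\mapsto T(t-s)w(\cdot,s)$ is differentiable on $(0,t)$ with vanishing derivative, so $w(\cdot,t)=0$.

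For the a priori estimate I would use the smoothing bounds of analytic semigroups: $\|T(t)\|_{\mathcal{L}(\ld)}\le 1$ (contractivity) and $\|AT(t)\|_{\mathcal{L}(\ld)}\le c\,t^{-1}$ for $t\in(0,T]$, with $c=c(\al,T)$ (again \cite{Pazy}). Since $u_{t}(\cdot,t)=\poch\da u(\cdot,t)=AT(t)\uz$, this immediately yields $\|u(\cdot,t)\|_{\ld}\le\|\uz\|_{\ld}$ and $t\|u_{t}(\cdot,t)\|_{\ld}=t\|\poch\da u(\cdot,t)\|_{\ld}\le c\|\uz\|_{\ld}$, and adding these three estimates gives the stated inequality.

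For the $C^{\infty}$-smoothing, recall that for an analytic semigroup $T(t)\uz\in\bigcap_{k\ge1}D(A^{k})$ for every $t>0$, with $A^{k}T(t)=(AT(t/k))^{k}$ so $\|A^{k}T(t)\|_{\mathcal{L}(\ld)}\le(ck/t)^{k}$, and $t\mapsto T(t)\uz$ is real-analytic (in particular $C^{\infty}$) from $(0,T]$ into $\ld$ with $\frac{d^{k}}{dt^{k}}T(t)\uz=A^{k}T(t)\uz$; hence $u\in C^{\infty}((0,T];\ld)$ and $u(\cdot,t)\in D((\poch\da)^{k})$ for all $k$ and all $t\in(0,T]$. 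It then remains to prove
\[
D((\poch\da)^{k})\subseteq H^{(\al+1)k}_{loc}(0,1),
\]
which I would do by induction on $k$. For $k=1$ this is (\ref{dziedzina}), since $\dd\subseteq H^{1+\al}(0,1)$. Assuming it for $k$, take $u\in D((\poch\da)^{k+1})$; then $u\in\dd$ and $g:=\poch\da u\in D((\poch\da)^{k})\subseteq H^{(\al+1)k}_{loc}(0,1)$. Since $u\in\dd$, Corollary \ref{eqc} gives $\da u=I^{1-\al}u_{x}\in{}_{0}H^{1}(0,1)\subseteq C[0,1]$ with $(\da u)(0)=0$; integrating $(\da u)'=g$ yields $\da u=I^{1}g$, so $\da u\in H^{(\al+1)k+1}_{loc}(0,1)$. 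Recovering the derivative through $u_{x}=\poch I^{\al}\da u$ (legitimate because $\poch I^{\al}I^{1-\al}u_{x}=u_{x}$), and using that on compact subintervals of $(0,1)$ the operator $I^{\al}$ gains $\al$ derivatives (cf. Corollary \ref{eqc} and Proposition \ref{ie}) while $\poch$ loses one (\cite[Remark 12.8]{Lions}), we get $u_{x}\in H^{(\al+1)k+\al}_{loc}(0,1)$, hence $u\in H^{(\al+1)k+\al+1}_{loc}(0,1)=H^{(\al+1)(k+1)}_{loc}(0,1)$, which closes the induction. Letting $k\to\infty$ and invoking the one-dimensional Sobolev embedding on compact subintervals gives $u(\cdot,t)\in C^{\infty}(0,1)$ for every $t\in(0,T]$.

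The main obstacle is this last bootstrap: everything else is a direct citation of analytic-semigroup theory, but here one must make rigorous the formal identities $\da u=I^{1}(\poch\da u)$ and $u_{x}=\poch I^{\al}\da u$ in the correct function-space setting and control precisely the regularity exchanged by the fractional operators -- which is delicate near $x=0$, where $\da$ is anchored and the scale ${}_{0}H^{s}(0,1)$ has been developed only for $s\le1$, and near $x=1$, where the boundary condition lives; this is exactly why the conclusion is only local. A convenient way around the endpoints is to split $\da u(x)=\jgja\izx(x-p)^{-\al}u_{x}(p)\,dp$ at an interior point $\ve$: for $x$ bounded away from $0$ the contribution of the integration over $[0,\ve]$ is smooth in $x$, while the remaining piece is a Caputo-type operator on $[\ve,1]$ to which the mapping properties of Section~2 apply directly.
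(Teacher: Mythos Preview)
Your approach is exactly the paper's approach: invoke the analytic-semigroup result (Theorem~\ref{anal}) and read off existence, uniqueness, the smoothing estimate, and $C^{\infty}$-regularity in time from the standard theory. The paper's own proof is literally two sentences long --- it cites \cite[Theorem 3.4]{Yagi} and \cite[Proposition 2.1.1]{Lunardi} and nothing more --- so your write-up is considerably more detailed than the original while following the same strategy (you cite \cite{Pazy} instead, which is equivalent).

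The one place where you go beyond the paper is the inclusion $D((\poch\da)^{k})\subseteq H^{(\al+1)k}_{loc}(0,1)$: the paper asserts this but does not argue it, whereas you supply an inductive bootstrap via $\da u = I^{1}g$ and $u_{x}=\poch I^{\al}\da u$ together with the splitting-at-$\ve$ trick to localise away from the anchor point. Your sketch is correct in spirit; the only care needed is that Corollary~\ref{eqc} and Proposition~\ref{ie} as stated cover only Sobolev orders $\le 1$, so for higher $k$ you must iterate integration by parts (or invoke the pseudodifferential character of $I^{\al}$ on compact subintervals) rather than cite those results directly --- but your final paragraph already anticipates this. In short: same route as the paper, with the gap in the paper's exposition actually filled in.
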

\begin{proof}
Since, we know that the operator $\poch \da$ generates an analytic semigroup, we may apply to (\ref{ab}) the general semigroup theory. Then, the above result follows from \cite[Theorem 3.4]{Yagi} and \cite[Proposition 2.1.1]{Lunardi}.
\end{proof}
\begin{rem}
One may consider the problem (\ref{ab}) with nonzero right-hand-side. Then the solution is obtained by the variation of constants formula. We may also increase the regularity of solution to (\ref{ab}) assuming higher regularity of the initial condition in a standard way (see for instance \cite[chapter 3.2.]{Yagi}).
\end{rem}
\section{Appendix}
We collect here, the results from the general calculus which were used in the paper. At first, we would like to cite the result, concerning distribution of zeros of Mittag-Leffler function from \cite{ML}.
\begin{prop}\cite[Theorem 4.2.1]{ML}\label{MLp}
Let us recall the definitions of Mittag-Leffler functions
\[
E_{\nu,\mu}(z) := \sum_{n=0}^{\infty} \frac{z^{n}}{\Gamma(\mu + \nu n)}, \m{ with } \mu,\nu \in \mathbb{R},\hd \nu>0 \hd \m{ and } E_{\nu}(z):= E_{\nu,1}(z).
\]
If we suppose that either
\[
\nu < 1, \hd \mu \in [1,1+\nu] \m{ or }\hd \nu \in (1,2), \hd \mu \in [\nu-1,1]\cup[\nu,2],
\]
then all roots of the function $E_{\nu,\mu}$ lie outside the angle
\[
\abs{\arg z} \leq \frac{\pi \nu}{2}.
\]
\end{prop}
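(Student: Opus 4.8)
The plan is to prove the equivalent statement that $E_{\nu,\mu}(z)\neq 0$ for every $z$ with $\abs{\arg z}\leq\frac{\pi\nu}{2}$, by combining the large-$\abs{z}$ asymptotics of $E_{\nu,\mu}$ with a compactness argument near the origin. The starting point is the Hankel-loop representation, valid for $0<\nu<2$,
\[
E_{\nu,\mu}(z)=\frac{1}{2\pi i}\int_{\gamma}\frac{e^{\zeta}\,\zeta^{\nu-\mu}}{\zeta^{\nu}-z}\,d\zeta,
\]
where $\gamma$ is a loop coming in from infinity below the negative real axis, encircling the origin, and returning to infinity above it; expanding $(\zeta^{\nu}-z)^{-1}$ in a geometric series and using the Hankel formula for $1/\Gamma$ recovers the defining power series. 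First I would deform $\gamma$ onto two rays symmetric about the positive real axis together with a small arc around the origin. The simple pole at $\zeta=z^{1/\nu}$ is then crossed precisely as $z$ enters the sector $\abs{\arg z}<\frac{\pi\nu}{2}$, and collecting its residue splits $E_{\nu,\mu}$ into the exponential term $\frac{1}{\nu}z^{(1-\mu)/\nu}e^{z^{1/\nu}}$ and a contour integral that stays bounded and is algebraically small as $\abs{z}\to\infty$.

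The heart of the matter is the resulting asymptotic expansion, which in the closed sector $\abs{\arg z}\leq\frac{\pi\nu}{2}$ reads
\[
E_{\nu,\mu}(z)=\frac{1}{\nu}\,z^{(1-\mu)/\nu}\,e^{z^{1/\nu}}-\sum_{k=1}^{N}\frac{z^{-k}}{\Gamma(\mu-\nu k)}+O\!\left(\abs{z}^{-N-1}\right)\quad\m{as }\abs{z}\to\infty.
\]
The exponential term never vanishes, so everything hinges on showing it dominates the algebraic correction, and the delicate case is a boundary ray $\arg z=\pm\frac{\pi\nu}{2}$, where $z^{1/\nu}=\pm i\abs{z}^{1/\nu}$ gives $\abs{e^{z^{1/\nu}}}=1$ and the exponential term has modulus $\frac{1}{\nu}\abs{z}^{(1-\mu)/\nu}$, against a leading correction of size $\sim\abs{z}^{-1}$. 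Here the upper constraint $\mu\leq 1+\nu$ is decisive: it is equivalent to $(1-\mu)/\nu\geq -1$, so the exponential term decays no faster than the correction, and comparing the two (with a comparison of constants needed only in the borderline case $\mu=1+\nu$) shows $\abs{E_{\nu,\mu}(z)}$ is bounded below by a positive multiple of $\abs{z}^{(1-\mu)/\nu}$ once $\abs{z}\geq R$. Hence no zeros with $\abs{z}\geq R$ lie in the closed sector.

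It remains to exclude zeros in the compact set $K=\{\,\abs{z}\leq R,\ \abs{\arg z}\leq\frac{\pi\nu}{2}\,\}$, which I regard as the main obstacle, since no single asymptotic term is available there. On the positive real axis the series $E_{\nu,\mu}(x)=\sum_{n\geq 0}x^{n}/\Gamma(\mu+\nu n)$ has only positive terms and is strictly positive, and $E_{\nu,\mu}(0)=1/\Gamma(\mu)\neq 0$, so the positive axis and a neighbourhood of the origin are zero-free. To upgrade this to all of $K$ I would apply the argument principle to the boundary of the truncated sector: the contribution of the outer arc is read off from the asymptotics of the previous paragraph, the contributions of the two rays are controlled using the nonvanishing just obtained on the real axis together with the bounded integral term from the representation, and one checks that the total change of $\arg E_{\nu,\mu}$ around the contour is zero, forcing the enclosed zero count to be $0$. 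The lower constraint $\mu\geq 1$ enters exactly here: it is what prevents the intermediate algebraic terms from producing a sign change, equivalently a nonzero winding, inside $K$, and dropping it would let zeros migrate into the sector. This settles the case $\nu<1$, $\mu\in[1,1+\nu]$.

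Finally, for $\nu\in(1,2)$ with $\mu\in[\nu-1,1]\cup[\nu,2]$ the same three-step scheme applies, since both the Hankel representation and the asymptotic expansion hold for all $0<\nu<2$; only the arithmetic of the exponent $(1-\mu)/\nu$ and of the boundary-ray comparison changes. The two admissible $\mu$-intervals are precisely those for which the exponential term still dominates the first one or two algebraic corrections on $\arg z=\pm\frac{\pi\nu}{2}$ and for which the winding computation over $K$ again vanishes, so the conclusion that all zeros lie outside $\abs{\arg z}\leq\frac{\pi\nu}{2}$ follows in the same way.
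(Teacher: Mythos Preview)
The paper does not prove this proposition at all: it is stated in the Appendix explicitly as a quotation of \cite[Theorem~4.2.1]{ML}, introduced by the sentence ``At first, we would like to cite the result, concerning distribution of zeros of Mittag-Leffler function from \cite{ML}.'' There is therefore no proof in the paper to compare your attempt against.

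As for the proposal itself, the overall strategy---Hankel representation, extraction of the exponential main term, asymptotic control for large $\abs{z}$, then an argument-principle count on a bounded sector---is indeed the standard route taken in the literature (including in \cite{ML}). However, two steps are only gestures rather than arguments. First, on the boundary rays $\arg z=\pm\frac{\pi\nu}{2}$ in the critical case $\mu=1+\nu$ the exponential term has modulus $\frac{1}{\nu}\abs{z}^{-1}$ while the leading algebraic correction is also of order $\abs{z}^{-1}$; since the exponential factor $e^{\pm i\abs{z}^{1/\nu}}$ oscillates, ``a comparison of constants'' does not by itself exclude cancellation, and one actually needs a finer asymptotic (or a shifted contour) to handle this edge. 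Second, the winding computation over the compact truncated sector $K$ is asserted but not carried out: knowing positivity on the positive real axis and nonvanishing on the outer arc does not automatically force zero winding along the two radial segments, and this is precisely where the specific $\mu$-intervals enter in a nontrivial way in \cite{ML}. So the outline is reasonable, but the two places you flag as ``the main obstacle'' and ``the borderline case'' are where the real work lies and are not yet done.
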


\no At last we quote here Theorem from \cite{Pazy}.

\begin{prop}\cite[Ch.1, Theorem 3.9.]{Pazy}\label{numerical}
Let $X$ be a Banach space. For a linear operator $A$ in $X$ we define its numerical range $S(A)$ as
\[
S(A) = \{\langle x^{*}, Ax\rangle: x \in D(A), \norm{x}=1, x^{*} \in X^{*}, \norm{x^{*}}=1,\langle x^{*}, x\rangle=1  \}
\]
Let us assume that $A$ is closed, linear and densely defined in $X$. We denote by $\Sigma:=\mathbb{C} \setminus \overline{S(A)}$. If $\lambda \in \Sigma$ then $\lambda I - A$ is injective and has closed range. Moreover, if $\Sigma_{0} \subseteq \Sigma$ is such that $\Sigma_{0}\cap \rho(A) \neq \emptyset$ then the spectrum of $A$ is contained in $\mathbb{C}\setminus \Sigma_{0}$ and
\[
\norm{\left(\lambda I - A\right)^{-1}} \leq \frac{1}{d(\lambda, \overline{S(A)})} \m{ for every } \lambda \in \Sigma_{0},
\]
where $d(\lambda, \overline{S(A)})$ is a distance of $\lambda$ from $\overline{S(A)}$.
\end{prop}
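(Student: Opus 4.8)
The plan is to derive the entire statement from a single lower bound. First I would show that for every $\lambda \in \Sigma$ and every $x \in D(A)$,
\[
\norm{(\lambda I - A)x} \geq d(\lambda, \overline{S(A)})\,\norm{x}.
\]
To obtain it, I may assume $\norm{x}=1$ and invoke the Hahn-Banach theorem to pick $x^{*} \in X^{*}$ with $\norm{x^{*}}=1$ and $\langle x^{*},x\rangle = 1$. Then
\[
\norm{(\lambda I - A)x} \geq \abs{\langle x^{*},(\lambda I - A)x\rangle} = \abs{\lambda - \langle x^{*}, Ax\rangle} \geq d(\lambda, S(A)) = d(\lambda, \overline{S(A)}),
\]
since $\langle x^{*},Ax\rangle \in S(A)$ and the distance to a set coincides with the distance to its closure; homogeneity removes the normalization. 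Because $\lambda \in \Sigma$ forces $d(\lambda, \overline{S(A)})>0$, injectivity of $\lambda I - A$ is immediate. For the closed range, if $(\lambda I - A)x_{n} \to y$ then the bound shows $\{x_{n}\}$ is Cauchy, so $x_{n}\to x$; closedness of $A$ gives $x \in D(A)$ and $(\lambda I - A)x = y$, whence $y$ lies in the range.

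Next I would establish the spectral inclusion, treating $\Sigma_{0}$ as a \emph{connected} subset of $\Sigma$ — the case that actually occurs in the application, where the region $\Sigma_{\nu}$ is connected, and the case under which the conclusion is correct (a disconnected $\Sigma_{0}$ meeting $\rho(A)$ in only one component would fail). The strategy is to prove that $\Sigma_{0}\cap \rho(A)$ is both relatively open and relatively closed in $\Sigma_{0}$; connectedness together with $\Sigma_{0}\cap\rho(A)\neq\emptyset$ then forces $\Sigma_{0}\subseteq\rho(A)$, i.e. $\sigma(A)\subseteq \mathbb{C}\setminus\Sigma_{0}$. Relative openness is just openness of $\rho(A)$. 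For relative closedness I take $\lambda_{n}\in\Sigma_{0}\cap\rho(A)$ with $\lambda_{n}\to\lambda\in\Sigma_{0}$; applying the lower bound to $x=(\lambda_{n}I-A)^{-1}y$ yields $\norm{(\lambda_{n}I-A)^{-1}}\leq d(\lambda_{n},\overline{S(A)})^{-1}$, and since $d(\cdot,\overline{S(A)})$ is continuous and positive at $\lambda$, these resolvent norms remain bounded by some $M$. The standard Neumann-series fact — that the distance from a resolvent point to the spectrum is at least the reciprocal of the resolvent norm — then places a ball of radius $1/M$ about each $\lambda_{n}$ inside $\rho(A)$; for large $n$ the limit $\lambda$ lies in such a ball, so $\lambda\in\rho(A)$.

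Finally, the resolvent estimate is an immediate corollary: for $\lambda\in\Sigma_{0}\subseteq\rho(A)$ and arbitrary $y\in X$, set $x=(\lambda I - A)^{-1}y$ and read off
\[
\norm{y} = \norm{(\lambda I - A)x} \geq d(\lambda,\overline{S(A)})\,\norm{x} = d(\lambda,\overline{S(A)})\,\norm{(\lambda I - A)^{-1}y},
\]
which rearranges to the asserted bound. I expect the only delicate point to be the relative-closedness step — promoting a uniform resolvent bound along a convergent sequence into membership of the limit in $\rho(A)$ through the perturbation argument; injectivity, closed range, and the resolvent estimate all fall out mechanically from the one lower bound.
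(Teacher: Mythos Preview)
The paper does not supply its own proof of this proposition: it is quoted verbatim in the Appendix as \cite[Ch.~1, Theorem~3.9]{Pazy} and used as a black box. So there is no in-paper argument to compare against; your proposal \emph{is} essentially the standard proof one finds in Pazy. The Hahn--Banach lower bound, the injectivity and closed-range consequences, the open--and--closed connectedness argument, and the final resolvent estimate are all correct and are exactly the ingredients of the textbook proof.

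One point worth flagging: you are right that the statement as transcribed in the paper omits the hypothesis that $\Sigma_{0}$ be connected (in Pazy the result is phrased in terms of connected components of $\Sigma$), and that without it the conclusion $\sigma(A)\subseteq\mathbb{C}\setminus\Sigma_{0}$ can fail. Your decision to supply this hypothesis and to note that the only application in the paper uses the connected set $\Sigma_{\nu}=\{\lambda:\abs{\arg\lambda}>\nu\}$ is the correct fix; this is a genuine (if minor) gap in the paper's restatement rather than in your argument.
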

\section{Acknowledgments}
The author is grateful to Prof. Piotr Rybka for his inspiration to write this paper and to Dr. Adam Kubica, Prof. Piotr Rybka and Prof. Masahiro Yamamoto for their valuable remarks. The author would like to thank the Reviewer for a suggestion of adding a chapter concerning generalizations.
The author was partly supported by National Sciences Center, Poland through 2017/26/M/ST1/00700 Grant.

\end{document}